\def\xyma{\xymatrix@M.7em}
\newtheorem{theorem}{Theorem}
\newtheorem{lemma}[theorem]{Lemma}
\newtheorem{proposition} [theorem]{Proposition}
\newtheorem{corollary}[theorem]{Corollary}
\DeclareMathAlphabet\mathbb{U}{msb}{m}{n}
\newcommand{\ilimit}{\,\varprojlim{}\:}
\newcommand{\Para}{\par\vspace{.5cm}\noindent}
\begin{document}
\title{Generalized dimension subgroups and derived functors}
\author{Roman Mikhailov and Inder Bir S. Passi*}\footnote{*Corresponding author}
\maketitle\centerline{\it Dedicated to Donald S. Passman on his seventy-fifth birthday}

\begin{quote}
\begin{abstract}Every two-sided ideal $\mathfrak a$ in the integral group ring $\mathbb Z[G] $ of a group $G$
determines a normal subgroup \linebreak $G \cap (1 + \mathfrak a)$ of $G$. In this paper certain problems
related to the identification of such subgroups, and their relationship with
derived functors in the sense of Dold-Puppe, are discussed.\end{abstract}
\end{quote}\par\vspace{.25cm}\noindent
Key words and phrases: Group ring, generalized dimension subgroup,
free group, derived functors of non-additive functors, limit of
representations.\par\vspace{.25cm}\noindent MSC2010: 18A30, 18E25,
20C05, 20C07 \par\vspace{1cm}
\section*{Introduction} Let $G$ be a group and $\mathbb Z[G]$ its integral group ring.
Every two-sided ideal $\mathfrak a$ in the integral group ring $\mathbb Z[G] $ of a group $G$
determines a normal subgroup  $G \cap (1 + \mathfrak a)$ of $G$.  The identification of such normal subgroups is a fundamental problem in the theory of group rings.  On expressing the group $G$ as a quotient  of a free group $F$, the problem can clearly be viewed as an  identification problem in the free group ring  $\mathbb Z[F]$. Let  $\mathfrak g$ denote the augmentation ideal of  $\mathbb Z[G]$. In analogy with the case when $\mathfrak a$ is a power $\mathfrak g^n$, $n\geq 1$, of the augmentation ideal  and the corresponding subgroup, denoted $D_n(G)$,  is called the $n$th {\it dimension subgroup} of $G$, we call the normal subgroups $G\cap (1+\mathfrak a+\mathfrak g^n),\,n\geq 1,$ {\it generalized dimension subgroups}.   For every group $G$ and integer $n\geq 1$, it is easily seen that $D_n(G)\geq \gamma_n(G)$, the $n$th term in the lower central series of $G$. It is well-known that $D_3(G)=\gamma_3(G)$, or, equivalently, that $F\cap (1+\mathfrak r\mathbb Z[F]+\mathfrak f^3)=R\gamma_3(F)$ for every free presentation $G\cong F/R$. A relationship between generalized dimension subgroups and  derived functors in the sense of Dold-Puppe \cite{DoldPuppe} is noticed on considering the subgroup $F\cap (1+\mathfrak r\mathfrak f+\mathfrak f^3)$ (Theorem \ref{D(3)}; \cite{HMP}, Theorem 3.3). Motivated by this observation, our aim in this paper is to explore further    relations between generalized dimension subgroups and
derived functors.  We expect our approach will initiate further work leading to deeper understanding of generalized dimension subgroups.

\par\vspace{.25cm}Based on the investigations of Narain Gupta \cite{Gupta:87},  we identify, in Section 1,  \linebreak generalized dimension subgroups $F\cap (1+\mathfrak a +\mathfrak f^4)$  for various two-sided ideals $\mathfrak a$ in a free group ring $\mathbb Z[F]$. In view of the fact that the dimension series  $\{D_n(G)\}_{n\geq 1}$ and the lower central series $\{\gamma_n(G)\}_{n\geq 1}$, in general, cease to be identical from $n=4$ onwards, the identification of these subgroups becomes of particular interest.   \linebreak  Typical of the cases that we consider is the identification of the subgroup \linebreak $F\cap (1+\mathfrak f\mathfrak r\mathfrak f+\mathfrak f^4)$ (Theorem \ref{typical}).  In Corollary \ref{Guptaprob} we show
that $F\cap (1+ \mathfrak r^2\mathbb Z[F]+\mathfrak f^4)\leq R\gamma_4(F)$, thus  answering  Narain
Gupta's Problem 6.9(a) in \cite{Gupta:87}. This \linebreak result, however, leaves open for future investigation  the interesting question whether $F\cap (1+\mathfrak r^{n-1}\mathbb Z[F]+\mathfrak f^{n+1})\leq R\gamma_{n+1}(F)$ for some $n\geq 4$.
\par\vspace{.25cm}
 In Section 2 we exhibit how some of the generalized dimension subgroups   are \linebreak related to derived functors.
Our main result here  is Theorem \ref{main1} relating  \linebreak
$F\cap (1+\mathfrak f\mathfrak r\mathfrak f+\mathfrak f^4)$ to the
homology of the Eilenberg-MacLane space $K(G_{ab},\, 2)$, where
$G_{ab}$ is the abelianization of the group $G\cong F/R$. More
precisely, we prove that, if $G_{ab}$ is 2-torsion-free, then  $\frac{F\cap(1+\mathfrak f\mathfrak r\mathfrak f+\mathfrak f^4)}{[R,\,R,\,F]\gamma_4(F)}\cong L_1\operatorname{SP}^3(G_{ab})$, where $L_1\operatorname{SP}^3$ is the first derived functor of the third symmetric power functor, and
there is a natural short exact sequence
\begin{equation}\label{frfsq}
\frac{F\cap(1+\mathfrak f\mathfrak r\mathfrak f+\mathfrak f^4)}{[R,\,R,\,F]\gamma_4(F)}\hookrightarrow
H_7K(G_{ab},\,2)\twoheadrightarrow \operatorname{Tor}(G_{ab},\,\mathbb Z/3\mathbb Z).
\end{equation}
Consequently, if $G_{ab}$ is $p$-torsion-free for $p=2,\ 3$, then there is
a natural \linebreak isomorphism
\begin{equation}\label{frfis}
\frac{F\cap (1+\mathfrak f\mathfrak r\mathfrak f+\mathfrak
f^4)}{[R,\,R,\,F]\gamma_4(F)}\cong H_7K(G_{ab},\,2).
\end{equation}
We define a functor  $\mathfrak L_s^3(A)$ on the category $\mathfrak A$ of abelian groups (see Section 2 for \linebreak definition) and prove (Theorem \ref{super}) that if $G$ is a group and $F/R $ its free presentation, then
$$\frac{F\cap (1+\mathfrak r^2\mathfrak f+\mathfrak f^4)}{\gamma_3(R)\gamma_4(F)}\cong L_2\mathfrak L_s^3(G_{ab}).$$
\par\vspace{.25cm}
In Section 3 we study  limits of functors  from  the category  of free presentations of groups to the category of abelian groups, and show their  connection with quadratic and cubic functors. Our main results on limits are Theorems \ref{quadraticpicture} and \ref{cubicpicture} where we show, in particular,  that $\ilimit \frac{\gamma_2(F)}{\gamma_2(R)\gamma_3(F)}$, $\ilimit \frac{\gamma_3(F)}{\gamma_3(R)\gamma_4(F)}$ and $\ilimit \frac{\gamma_3(F)}{[\gamma_2(R),\,F]\gamma_4(F)}$  agree with certain derived functors.  In particular, we show (see Theorem
\ref{cubicpicture}) that $$ \ilimit
\frac{\gamma_3(F)}{[R,\,R,\,F]\gamma_4(F)}\cong L_1\operatorname{SP}^3(G_{ab}).
$$
\par\vspace{.25cm}
For background results, we refer the reader to the monographs \cite{Gupta:87} and \cite{MP:2009} and the article \cite{IvanovMikhailov}.

\section{Generalized dimension subgroups}

\par\vspace{.25cm}
Let $F$ be a free group with basis $X$. Then the augmentation ideal $\mathfrak f$ of the group ring $\mathbb Z[F]$ is a two-sided ideal which is free as a left (resp. right) $\mathbb Z[F]$-module with basis $\{x-1\,|\,x\in X\}$ (\cite{Gruenberg:1970}, p.\,32). Thus every element $u\in \mathfrak f$ can be written uniquely as
$$u=\sum_{\,{_x}u\in \mathbb Z[F],\,x\in
X}\hspace{-.65cm}{_x}u(x-1)=\sum_{x\in X,\,u_x\in \mathbb Z[F]}\hspace{-.55cm}(x-1)u_x.$$  We  refer to $_xu$ (resp. $u_x$) as the {\it left} (resp. {\it right}) {\it partial derivative} of $u$ with respect to the generator $x$.
\par\vspace{.25cm}
Let
$F$ be  a free group of finite rank with basis $\{x_1,\,\ldots\,,\,x_m\}$, $e_1,\,e_2,\,\ldots\,,\,e_m$ integers $\geq 0$ satisfying  $e_m|e_{m-1}|\ldots|e_1$, and $S=\langle x_1^{e_1},\,\ldots\,,\,x_m^{e_m},\, \gamma_2(F)\rangle$. For a two-sided ideal $\mathfrak a$ of the group ring $\mathbb Z[F]$, set  $$D(n,\,\mathfrak a):=F\cap (1+\mathfrak a+\mathfrak f^n).$$  For $1\leq i\leq m$, let
$$
t(x_i,\,e_i):=\begin{cases} 1+x_i+\dots+x_i^{e_i-1},\ \text{if}\ e_i\geq 1,\\
0,\ \text{if}\ e_i=0\end{cases}
$$
Recall the following result of Narain Gupta (\cite{Gupta:87}, Theorem 3.2, p.\,81).\par\vspace{.25cm}
\begin{theorem}
For all $n\geq 1$, modulo $[\gamma_2(F),\,S]\gamma_{n+2}(F),$ the group
$D(n+2,\,\mathfrak f\mathfrak s)$ is generated by
the elements
$$
[x_i,\,x_j]^{t(x_i,\,e_i)a_{ij}},\ 1\leq i<j\leq m,
$$
where $a_{ij}=a_{ij}(x_j,\,\dots\,,\, x_m)\in \mathbb Z[F]$ and
$$
t(x_i,\,e_i)a_{ij}\in t(x_j,e_j)\mathbb Z[F]+\mathfrak s\mathbb Z[F]+\mathfrak
f^n.
$$
\end{theorem}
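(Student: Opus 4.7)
The plan is induction on $n$. For the forward inclusion, I would compute the Magnus image of $[x_i,x_j]^{t(x_i,e_i)a_{ij}}-1$ modulo $\mathfrak{f}^{n+2}$: since $[x_i,x_j]-1\equiv (x_i-1)(x_j-1)-(x_j-1)(x_i-1) \pmod{\mathfrak{f}^3}$, and conjugation on $\gamma_2(F)$ corresponds (up to higher-order corrections) to multiplication by the exponent on the ring side, one finds this image lies in the coset $((x_i^{e_i}-1)(x_j-1)-(x_j-1)(x_i^{e_i}-1))a_{ij}+\mathfrak{f}^{n+2}$. The hypothesis $t(x_i,e_i)a_{ij}\in t(x_j,e_j)\mathbb{Z}[F]+\mathfrak{s}\mathbb{Z}[F]+\mathfrak{f}^n$ then places this element in $\mathfrak{f}\mathfrak{s}+\mathfrak{f}^{n+2}$, confirming the inclusion in $D(n+2,\mathfrak{f}\mathfrak{s})$.

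For the reverse inclusion, first note $\mathfrak{f}\mathfrak{s}+\mathfrak{f}^{n+2}\subseteq\mathfrak{f}^2$ forces $D(n+2,\mathfrak{f}\mathfrak{s})\subseteq\gamma_2(F)$. The base case $n=1$ is an explicit computation in $\gamma_2(F)/\gamma_3(F)\cong\mathfrak{f}^2/\mathfrak{f}^3$, using the monomial basis of $\mathfrak{f}^2/\mathfrak{f}^3$ and the observation that $\mathfrak{s}$ is generated as a two-sided ideal by $x_i^{e_i}-1=(x_i-1)t(x_i,e_i)$ and $[x_i,x_j]-1$, which converts $w-1\in\mathfrak{f}\mathfrak{s}+\mathfrak{f}^3$ into the stated divisibility conditions on the coefficients $c_{ij}$. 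For the inductive step, given $w\in D(n+2,\mathfrak{f}\mathfrak{s})$, apply the hypothesis to $w\in D(n+1,\mathfrak{f}\mathfrak{s})$ to produce coefficients $a_{ij}^{(0)}$ satisfying the weaker congruence modulo $\mathfrak{f}^{n-1}$. The residual $w':=w\cdot\prod[x_i,x_j]^{-t(x_i,e_i)a_{ij}^{(0)}}$ then lies in $\gamma_{n+1}(F)\cap D(n+2,\mathfrak{f}\mathfrak{s})$ (modulo $[\gamma_2(F),S]\gamma_{n+2}(F)$), and one expands $w'$ in the Hall basis of weight-$(n+1)$ basic commutators, rewriting each $[x_{i_1},\ldots,x_{i_{n+1}}]$ modulo $\gamma_{n+2}(F)$ as $[x_{i_1},x_{i_2}]^{(x_{i_3}-1)\cdots(x_{i_{n+1}}-1)}$ to produce adjustments $b_{ij}\in\mathfrak{f}^{n-1}$ and set $a_{ij}:=a_{ij}^{(0)}+b_{ij}$.

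The hard part will be the sharpening step in the inductive case: showing that because $w'$ actually lies in $1+\mathfrak{f}\mathfrak{s}+\mathfrak{f}^{n+2}$ (and not merely in $1+\mathfrak{f}\mathfrak{s}+\mathfrak{f}^{n+1}$), the combined coefficients $t(x_i,e_i)(a_{ij}^{(0)}+b_{ij})$ land in $t(x_j,e_j)\mathbb{Z}[F]+\mathfrak{s}\mathbb{Z}[F]+\mathfrak{f}^n$, one full filtration step beyond what the inductive hypothesis directly provides. This reduces to a linear-algebra identification in $\mathfrak{f}^{n+1}/\mathfrak{f}^{n+2}$, via the monomial basis $(x_{k_1}-1)\cdots(x_{k_{n+1}}-1)$, of precisely which combinations of weight-$(n+1)$ basic commutators project into $\mathfrak{f}\mathfrak{s}+\mathfrak{f}^{n+2}$. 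The role of quotienting by $[\gamma_2(F),S]$ is exactly to absorb the ambiguity coming from the $S$-action on $\gamma_2(F)$, leaving a clean match between the residual combinatorial data and the stated generators.
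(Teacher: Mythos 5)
First, a point of reference: the paper does not prove this statement at all --- it is quoted verbatim as Narain Gupta's Theorem 3.2 (\cite{Gupta:87}, p.\,81), so there is no in-paper proof to measure you against. Your overall strategy (Magnus/Fox partial-derivative calculus for the forward inclusion, induction on $n$ along the weight filtration for the converse) is the right family of techniques; it is how Gupta argues and how the present paper proves its own Theorems \ref{fs4} and \ref{typical}. Your forward inclusion is essentially fine once one checks that $(x_i^{e_i}-1)(x_j-1)a_{ij}\in\mathfrak f\mathfrak s+\mathfrak f^{n+2}$ follows from the stated membership of $t(x_i,e_i)a_{ij}$ (using $x_i^{e_i}-1=(x_i-1)t(x_i,e_i)$ and the fact that $a_{ij}$ involves only $x_j,\dots,x_m$); that detail is worth writing out, but it is routine.

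The converse, however, has a genuine gap precisely at the step you defer. After subtracting the level-$(n-1)$ approximation, the residual $w'$ expands in weight-$(n+1)$ basic commutators, and rewriting $[x_{i_1},x_{i_2},x_{i_3},\dots,x_{i_{n+1}}]$ as $[x_{i_1},x_{i_2}]^{c}$ produces exponents $c\in\mathfrak f^{n-1}$ that are \emph{arbitrary} monomials; to fold them into generators of the prescribed shape you must write $c=t(x_i,e_i)b_{ij}$ with $b_{ij}=b_{ij}(x_j,\dots,x_m)$, and since $t(x_i,e_i)\equiv e_i\bmod \mathfrak f$ this is a nontrivial divisibility constraint, not a change of basis. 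It is exactly here that one must take left and right partial derivatives of the relation $w'-1\in\mathfrak f\mathfrak s+\mathfrak f^{n+2}$ to extract conditions of the form $e_i\mid(\cdot)$, $e_j\mid(\cdot)$ --- the analogues of the computations in Lemma \ref{L1} and Theorem \ref{typical}, and the source of the conditions such as $e_j\,|\,\frac{e_i}{e_j}\binom{e_j}{2}a_{ij}$ that reappear in Theorem \ref{fs4}(ii). Calling this ``a linear-algebra identification in $\mathfrak f^{n+1}/\mathfrak f^{n+2}$'' understates it: the identification is over $\mathbb Z$ with the ideal $\mathfrak f\mathfrak s$ interposed, which is where all the arithmetic lives, and it also has to deliver the normalization $a_{ij}=a_{ij}(x_j,\dots,x_m)$ (which needs a separate collection argument, e.g.\ descending induction on $i$ as in the paper's own proofs). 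Two smaller slips: $\gamma_2(F)/\gamma_3(F)$ is not isomorphic to $\mathfrak f^2/\mathfrak f^3$ (it is $\Lambda^2(F_{ab})$ embedding into $F_{ab}^{\otimes 2}$); and you should verify, not assume, that the level-$(n-1)$ approximating product lies in $1+\mathfrak f\mathfrak s+\mathfrak f^{n+1}$ so that $w'$ really lands where you claim modulo $[\gamma_2(F),S]\gamma_{n+2}(F)$.
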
\par\vspace{.25cm}The cases $n=1,\, 2$ of the above Theorem yield the following identification.
\par\vspace{.25cm}
\begin{theorem}\label{fs4}$(i)$ Modulo $\gamma_2(S)\gamma_3(F)$,\par\vspace{.25cm}\noindent$$D(3,\,\mathfrak f\mathfrak s)=\langle [x_i^{e_i},\,x_j]\,|\,1\leq i< j\leq m\rangle.$$
\par\vspace{.25cm}\noindent
$(ii)$ Modulo $\gamma_2(S)\gamma _4(F)$,
$$D(4,\,\mathfrak f\mathfrak s)=\\
\left\langle [x_i^{e_i},\,x_j]^{a_{ij}}\,|\, 1\leq i<j\leq m,\ e_j|\frac{e_i}{e_j}\binom{e_j}{2}a_{ij}\right\rangle.$$
\end{theorem}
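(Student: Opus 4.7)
The plan is to deduce Theorem \ref{fs4} from the theorem of Gupta stated immediately above, applied with $n=1$ for part (i) and with $n=2$ for part (ii), and then to simplify the form of the generators and of the defining condition to match the statement.

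Before doing either case I would record two observations for use in both parts. First, the identity
$$
[x_i^{e_i},\,x_j]=[x_i,\,x_j]^{1+x_i+\cdots+x_i^{e_i-1}}=[x_i,\,x_j]^{t(x_i,\,e_i)}
$$
holds in $F$, being the standard expansion of $[a^n,b]$ as a product of conjugates of $[a,b]$; so Gupta's generators $[x_i,x_j]^{t(x_i,e_i)a_{ij}}$ are already of the form $[x_i^{e_i},x_j]^{a_{ij}}$. Second, $\gamma_2(F)\leq S$ gives $[\gamma_2(F),S]\leq[S,S]=\gamma_2(S)$, so Gupta's ambient quotient by $[\gamma_2(F),S]\gamma_{n+2}(F)$ descends to the coarser quotient by $\gamma_2(S)\gamma_{n+2}(F)$ used in Theorem \ref{fs4}.

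For part (i) I would use that the conjugation action of $F$ on $\gamma_2(F)/\gamma_3(F)$ is trivial, so $[x_i^{e_i},x_j]^{a_{ij}}\equiv[x_i^{e_i},x_j]^{\epsilon(a_{ij})}\pmod{\gamma_3(F)}$; only the integer $\epsilon(a_{ij})$ survives. Under augmentation Gupta's condition becomes $e_i\epsilon(a_{ij})\in e_j\mathbb{Z}$, which is automatic because $e_j\mid e_i$. The generating set therefore collapses to $\langle[x_i^{e_i},x_j]:1\leq i<j\leq m\rangle$.

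For part (ii) the plan has two sub-steps. In (a) I would show that $[x_i^{e_i},x_j]^{a_{ij}}\equiv[x_i^{e_i},x_j]^{\epsilon(a_{ij})}$ modulo $\gamma_2(S)\gamma_4(F)$, i.e.\ each correction $[[x_i^{e_i},x_j],f]$ lies in $\gamma_2(S)\gamma_4(F)$. Working in the associated graded Lie algebra one first gets $[[x_i^{e_i},x_j],f]\equiv e_i[[x_i,x_j],f]\pmod{\gamma_4(F)}$; the Jacobi identity then rewrites $e_i[[x_i,x_j],x_k]$ as $[x_i^{e_i},[x_j,x_k]]+(e_i/e_j)[x_j^{e_j},[x_k,x_i]]$, where both summands lie in $[S,\gamma_2(F)]\subseteq\gamma_2(S)$, and $e_i/e_j$ is integral by the divisibility chain. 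In (b) I would reduce Gupta's condition to the stated divisibility: using $t(x_i,e_i)\equiv e_i+\binom{e_i}{2}(x_i-1)\pmod{\mathfrak{f}^2}$ and the analogue for $j$, and identifying $(\mathfrak{s}+\mathfrak{f}^2)/\mathfrak{f}^2$ with $\bigoplus_k e_k\mathbb{Z}(x_k-1)\subseteq\mathfrak{f}/\mathfrak{f}^2$, the condition $t(x_i,e_i)a_{ij}\in t(x_j,e_j)\mathbb{Z}[F]+\mathfrak{s}\mathbb{Z}[F]+\mathfrak{f}^2$ becomes a linear system on $a_{ij}\equiv n+\sum_k w_k(x_k-1)$ with $n=\epsilon(a_{ij})$. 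The augmentation equation forces the multiplier $c=ne_i/e_j$; the coefficient of $(x_j-1)$ demands $e_j\mid n\tfrac{e_i}{e_j}\binom{e_j}{2}$; the coefficient of $(x_i-1)$ demands $e_j\mid n\binom{e_i}{2}$; these two conditions coincide via the congruence $\binom{e_i}{2}\equiv\tfrac{e_i}{e_j}\binom{e_j}{2}\pmod{e_j}$ (the difference equals $\tfrac{e_j^{2}k(k-1)}{2}$ with $k=e_i/e_j$); the remaining coordinates give no new constraint.

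The hard part will be sub-step (a). It is the genuine use of the divisibility hypothesis $e_m\mid\cdots\mid e_1$, and it requires careful juggling of Hall--Witt/Jacobi in $\gamma_3(F)/\gamma_4(F)$ together with the identification of $\gamma_2(S)$ inside the associated graded. Once (a) is in hand, sub-step (b) is essentially routine coordinate bookkeeping in $\mathbb{Z}[F]/\mathfrak{f}^2$.
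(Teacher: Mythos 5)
Your proposal is correct and follows essentially the same route as the paper: both rest on Gupta's theorem, rewrite $[x_i,\,x_j]^{t(x_i,\,e_i)a_{ij}}$ as $[x_i^{e_i},\,x_j]^{a_{ij}}$, reduce to integral exponents, and extract the same pair of divisibility conditions $e_j\mid\frac{e_i}{e_j}\binom{e_j}{2}a_{ij}$ and $e_j\mid\binom{e_i}{2}a_{ij}$, of which the first implies the second. The only substantive difference is presentational: you justify discarding the $\mathbb Z[F]$-part of the exponents explicitly via the Jacobi identity (your sub-step (a)), a point the paper leaves implicit, and you read off the divisibilities in $\mathfrak f/\mathfrak f^2$ rather than in $\mathfrak f^2/(\mathfrak f^3+\mathfrak s)\cong \operatorname{SP}^2(F/S)$ as the paper does.
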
 \proof
We sketch  the proof for (ii), the computations for (i) are similar and simpler. \par\vspace{.25cm}
It is straight-forward to check that all elements  $[x_i^{e_i},\,x_j]^{a_{ij}}$ with \linebreak $ 1\leq i<j\leq m,\ e_j|\frac{e_i}{e_j}\binom{e_j}{2}a_{ij}$, and the subgroup  $\gamma_2(S)\gamma_4(F)$, are contained in $D(4,\,\mathfrak f\mathfrak s)$.\par\vspace{.25cm}
Conversely, let  $w\in D(4,\,\mathfrak f\mathfrak s)$. Then, observe that  modulo $\gamma_2(S)\gamma_4(F)$,  $w=\prod_{i=1}^m w_i $, where \begin{equation}w_i=\prod_{j=i+1}^m [x_i,\,x_j]^{d_{ij}}\in D(4,\,\mathfrak f\mathfrak s),\end{equation} with $d_{ij}=d_{ij}(x_i,\,\ldots\,,\,x_m)$.
From eq.(1) it follows that \begin{equation} d_{ij}\in t(x_i,\,e_i)\mathbb Z[F]+\mathfrak f^4+\mathfrak a,\end{equation}where $\mathfrak a=\mathbb Z[F](\gamma_2(F)-1).$
Thus we have, modulo $\gamma_4(F)\gamma_2(S)$, \begin{equation}
w_i=\prod_{j=i+1}^m[x_i^{e_i},\,x_j]^{a_{ij}}\in D(4,\,\mathfrak f\mathfrak s),\end{equation}where $a_{ij}=a_{ij}(x_i,\ldots,x_m)\in \mathbb Z[F].$
\par\vspace{.5cm}\noindent
Eq.(3) is equivalent to saying that \begin{equation}
t(x_i,\,e_i)\sum_{j=i+1}^m(x_j-1)a_{ij}\in \mathfrak f^3+\mathfrak s.\end{equation}Recall that for $i<j, \ e_j|e_i$. Modulo $\mathfrak f^3+\mathfrak s, $
\begin{multline}
t(x_i,\,e_i)\sum_{j=i+1}^m(x_j-1)a_{ij}=\{{e_i}+\binom{e_i}{2}(x_i-1)\}\sum_{j=i+1}^m(x_j-1)a_{ij}\\=\sum_{j=i+1}^ma_{ij}\frac{e_i}{e_j}e_j(x_j-1)+
\binom{e_i}{2}(x_i-1)\sum_{j=i+1}^m(x_j-1)a_{ij}\\
=-\sum_{j=i+1}^ma_{ij}\frac{e_i}{e_j}\binom{e_j}{2}(x_j-1)^2+
\binom{e_i}{2}(x_i-1)\sum_{j=i+1}^m(x_j-1)a_{ij}
.\end{multline}
Since all  terms in the last expression are in $\mathfrak f^2$, we can assume that the elements $a_{ij}\in \mathbb Z$.
\par\vspace{.5cm}\noindent
Since $\operatorname{SP}^2(F/S)\simeq \mathfrak f^2/(\mathfrak f^3+\mathfrak s)$, eq.(5) holds if and only if \begin{equation}e_j|\frac{e_i}{e_j}\binom{e_j}{2}a_{ij},\quad
e_j|\binom {e_i}{2}a_{ij}.\end{equation}Note that the latter divisibility requirement  holds if the former does.  Thus we have the claimed  identification. $\Box$ \par\vspace{.5cm}
Since $D(4,\,\mathfrak f\mathfrak s)\leq D(3,\,\mathfrak f\mathfrak s) $ and $\gamma_4(F)\leq \gamma_3(F)$, we have a  natural map

$$
\theta: D(4,\mathfrak f\mathfrak s)/\gamma_2(S)\gamma_4(F)\to
D(3,\mathfrak f\mathfrak s)/\gamma_2(S)\gamma_3(F).
$$
\par\vspace{.25cm}\noindent
We claim that $\theta $ is a monomorphism. To this end, we need the following
\par\vspace{.25cm}
\begin{lemma}\label{L1}
 $\gamma_3(F)\cap D(4,\,\mathfrak f\mathfrak s)\leq \gamma_4(F)\gamma_2(S)$.
\end{lemma}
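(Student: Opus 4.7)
The plan is to combine the normal form for $D(4,\mathfrak{fs})$ modulo $\gamma_2(S)\gamma_4(F)$ provided by Theorem~\ref{fs4}(ii) with the hypothesis $w\in\gamma_3(F)$: the former reduces $w$ to an explicit commutator product, while the latter forces a stronger divisibility on the exponents, whereupon a direct commutator computation places the product into $\gamma_2(S)\gamma_4(F)$.

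Concretely, starting from $w\in\gamma_3(F)\cap D(4,\mathfrak{fs})$, Theorem~\ref{fs4}(ii) lets me assume
$$w \equiv \prod_{i<j}[x_i^{e_i},x_j]^{a_{ij}} \pmod{\gamma_2(S)\gamma_4(F)},$$
with $a_{ij}\in\mathbb Z$ satisfying $e_j\mid\frac{e_i}{e_j}\binom{e_j}{2}a_{ij}$. Projecting to $\gamma_2(F)/\gamma_3(F)\cong\Lambda^2 F_{ab}$ sends this product to $\sum e_i a_{ij}\,\bar x_i\wedge\bar x_j$, while the image of $\gamma_2(S)$ there is generated by the elements $e_i e_j\,\bar x_i\wedge\bar x_j$. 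Since $w\in\gamma_3(F)$ forces the product to land in the image of $\gamma_2(S)$, comparing coefficients yields $e_j\mid a_{ij}$ for every $i<j$ (the case $e_i=0$ is vacuous since then $[x_i^{e_i},x_j]=1$). Write $a_{ij}=e_j b_{ij}$.

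It then remains to show that each $[x_i^{e_i},x_j]^{e_j b_{ij}}$ lies in $\gamma_2(S)\gamma_4(F)$. Because $[\gamma_2(F),\gamma_2(F)]\subseteq\gamma_4(F)$, the group $\gamma_2(F)/\gamma_4(F)$ is abelian, and a standard commutator collection gives
$$[u,v]^n \equiv [u,v^n]\,[u,v,v]^{-\binom{n}{2}} \pmod{\gamma_4(F)}.$$
Applied with $u=x_i^{e_i}$, $v=x_j$, $n=e_j$, this produces the factor $[x_i^{e_i},x_j^{e_j}]\in\gamma_2(S)$ together with a cubic correction $[x_i^{e_i},x_j,x_j]^{-\binom{e_j}{2}}$. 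Since $[x_i^{e_i},x_j,x_j]\equiv[x_i,x_j,x_j]^{e_i}\pmod{\gamma_4(F)}$ and, by another application of the same identity, $[x_i,x_j,x_j]^{e_j}\equiv[[x_i,x_j],x_j^{e_j}]\in[\gamma_2(F),S]\subseteq\gamma_2(S)\pmod{\gamma_4(F)}$, the divisibility $e_j\mid e_i$ supplies exactly the factor of $e_j$ needed to throw the correction into $\gamma_2(S)\gamma_4(F)$.

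The main obstacle is this last commutator bookkeeping; the crucial point is that the hypothesis $e_j\mid e_i$ is precisely what is needed to absorb the extra factor of $e_j$ arising from the cubic correction, and without it the argument breaks down. Everything else reduces to applying Theorem~\ref{fs4}(ii) and comparing images in the well-understood abelian quotient $\Lambda^2 F_{ab}$.
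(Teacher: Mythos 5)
Your proof is correct, but it follows a genuinely different route from the paper's. The paper never invokes Theorem~\ref{fs4}: it uses the hypothesis $w\in\gamma_3(F)$ first, expanding $w$ modulo $\gamma_4(F)$ in the free abelian group $\gamma_3(F)/\gamma_4(F)$ on the basic commutators $[[x_j,x_i],x_k]$, and then extracts the divisibility conditions $e_k\mid a_{ijk}$ by free differential calculus (right partial derivatives) applied to the membership $w-1\in\mathfrak f\mathfrak s+\mathfrak f^4$. You go the other way around: you use the membership in $D(4,\mathfrak f\mathfrak s)$ first, via the already-established normal form of Theorem~\ref{fs4}(ii), and then exploit $w\in\gamma_3(F)$ only through the abelian quotient $\gamma_2(F)/\gamma_3(F)\cong\Lambda^2 F_{ab}$, where comparing coefficients against the image of $\gamma_2(S)$ gives $e_j\mid a_{ij}$; the conclusion then follows from elementary commutator collection (using that $\gamma_2(F)/\gamma_4(F)$ is abelian, that the correction term $[x_i,x_j,x_j]$ appears with exponent divisible by $e_i$, hence by $e_j$, and that $[\gamma_2(F),S]\leq\gamma_2(S)$) with no group-ring computation at all. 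Both arguments are sound and there is no circularity, since Theorem~\ref{fs4} is proved before the lemma and independently of it. What your version buys is economy: the lemma becomes an almost formal consequence of the normal form plus the structure of $\Lambda^2 F_{ab}$. What the paper's version buys is independence from Theorem~\ref{fs4} and methodological uniformity with the rest of Section~1, where all identifications are driven by partial derivatives in $\mathbb Z[F]$. The only points worth making explicit in a polished write-up are (a) that collecting a word in the generators of Theorem~\ref{fs4}(ii) into the single product $\prod_{i<j}[x_i^{e_i},x_j]^{a_{ij}}$ is legitimate because all factors lie in $\gamma_2(F)$ and $[\gamma_2(F),\gamma_2(F)]\leq\gamma_4(F)$, and (b) the degenerate cases $e_i=0$ or $e_j=0$, which you do address and which are indeed harmless under the chain condition $e_m\mid e_{m-1}\mid\dots\mid e_1$.
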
\par\vspace{.15cm}\proof
Let $w\in \gamma_3(F)\cap D(4,\,\mathfrak f\mathfrak s)$.
Then, modulo $\gamma_4(F)$,  $w=\prod_{i=1}^m w_i$, $$w_i=\prod_{j>i\leq k}[[x_j,\,x_i],\,x_k]^{a_{ijk}}\in D(4,\,\mathfrak f\mathfrak s), \ a_{ijk}\in \mathbb Z.$$ We then have $$\sum_{j>i\leq k}a_{ijk}[x_j-1)(x_i-1)-(x_i-1)(x_j-1)](x_k-1)\in \mathfrak f^4+\mathfrak f\mathfrak s.$$ Comparing the right partial derivatives with respect to  $x_j$, we have, for each $j>i$,
$$\sum_{k\geq i}a_{ijk}(x_i-1)(x_k-1)\in \mathfrak f^3+\mathfrak s\mathbb Z[F].$$ This is possible if and only if $e_k|a_{ijk}$ for each $k\geq i$. But then $[[x_j,\,x_i],\,x_k]^{a_ijk}\in \gamma_4(F)\gamma_2(S)$, and the assertion stands proved. $\Box$

\par\vspace{.25cm}
\begin{corollary}
The natural map $$\theta: D(4,\,\mathfrak f\mathfrak s)/\gamma_2(S)\gamma_4(F)\to D(3,\mathfrak f\mathfrak s)/\gamma_2(S)\gamma_3(F)$$is a monomorphism, and the cokernel of  $\theta$ is an elementary abelian 2-group. \end{corollary}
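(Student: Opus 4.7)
The argument splits into two parts: injectivity of $\theta$ and computation of its cokernel.

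\textit{Injectivity.} Observe first that $\gamma_2(S)\subseteq D(4,\mathfrak f\mathfrak s)$: for $s_1,s_2\in S$ one has
$$[s_1,s_2]-1 \;=\; s_1^{-1}s_2^{-1}\bigl((s_1-1)(s_2-1)-(s_2-1)(s_1-1)\bigr)\in \mathfrak s^2\subseteq \mathfrak f\mathfrak s.$$
Now let $w\in D(4,\mathfrak f\mathfrak s)$ map to the identity of $A:=D(3,\mathfrak f\mathfrak s)/\gamma_2(S)\gamma_3(F)$, so that $w=sg$ with $s\in\gamma_2(S)$ and $g\in\gamma_3(F)$. Since $s\in D(4,\mathfrak f\mathfrak s)$, we have $g=s^{-1}w\in\gamma_3(F)\cap D(4,\mathfrak f\mathfrak s)$, and Lemma \ref{L1} yields $g\in\gamma_4(F)\gamma_2(S)$. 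Hence $w\in\gamma_2(S)\gamma_4(F)$, proving $\theta$ is a monomorphism.

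\textit{Cokernel.} I would work inside the standard identification $\gamma_2(F)/\gamma_3(F)\cong\bigwedge^2 F_{ab}=\bigoplus_{i<j}\mathbb Z\cdot(x_i\wedge x_j)$. Since $S$ maps onto $\bigoplus_i e_i\mathbb Z\cdot x_i$ in $F_{ab}$, the subgroup $\gamma_2(S)\gamma_3(F)/\gamma_3(F)$ equals $\bigoplus_{i<j}e_ie_j\mathbb Z\cdot(x_i\wedge x_j)$, giving
$$\gamma_2(F)/\gamma_2(S)\gamma_3(F)\cong \bigoplus_{i<j}\mathbb Z/(e_ie_j)\cdot(x_i\wedge x_j).$$
By Theorem \ref{fs4}(i), $A$ is the subgroup generated by the classes $[x_i^{e_i},x_j]\mapsto e_i(x_i\wedge x_j)$, so $A\cong\bigoplus_{i<j}\mathbb Z/e_j$. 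By Theorem \ref{fs4}(ii), $\operatorname{Image}(\theta)$ is generated in the $(i,j)$-summand by $\{a_{ij}\bmod e_j : e_j\mid \tfrac{e_i}{e_j}\binom{e_j}{2}a_{ij}\}$. Setting $k=e_i/e_j$, a short computation shows the condition is automatic when $e_j$ is odd or $k$ is even, and simplifies to $2\mid a_{ij}$ when $e_j$ is even and $k$ is odd. Therefore the $(i,j)$-summand of $\operatorname{Image}(\theta)$ is either all of $\mathbb Z/e_j$ or the index-$2$ subgroup $2\mathbb Z/e_j\mathbb Z$, and $\operatorname{coker}(\theta)$ is a direct sum of copies of $\mathbb Z/2$, one for each pair $(i,j)$ with $e_j$ even and $e_i/e_j$ odd.

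The main subtlety is the arithmetic simplification of the divisibility condition from Theorem \ref{fs4}(ii); once that reduction to a mod-$2$ statement is in place, the cokernel is visibly an elementary abelian $2$-group. The injectivity is a straightforward consequence of Lemma \ref{L1} together with the observation $\gamma_2(S)\subseteq D(4,\mathfrak f\mathfrak s)$.
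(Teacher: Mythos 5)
Your proof is correct, and it splits the same way the paper does, but the cokernel half takes a genuinely different route. The injectivity argument is identical to the paper's: write $w=uv$ with $u\in\gamma_2(S)\subseteq D(4,\,\mathfrak f\mathfrak s)$ and $v\in\gamma_3(F)$, then apply Lemma \ref{L1}. For the cokernel, the paper argues more briefly: $D(3,\,\mathfrak f\mathfrak s)$ is generated modulo $\gamma_2(S)\gamma_3(F)$ by the classes of $[x_j,\,x_i^{e_i}]$, and $a_{ij}=2$ always satisfies the divisibility condition of Theorem \ref{fs4}(ii) (since $\tfrac{e_i}{e_j}\binom{e_j}{2}\cdot 2=e_i(e_j-1)$ is a multiple of $e_j$ because $e_j\mid e_i$), so every generator of $\operatorname{coker}\theta$ has order dividing $2$ and the claim follows without any further computation. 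Your version instead computes everything explicitly inside $\gamma_2(F)/\gamma_3(F)\cong\Lambda^2(F_{ab})$, and the arithmetic reduction is right: the condition $e_j\mid\tfrac{e_i}{e_j}\binom{e_j}{2}a_{ij}$ is vacuous for $e_j$ odd and reduces to $2\mid (e_i/e_j)a_{ij}$ for $e_j$ even, since $e_j-1$ is then odd. What your approach buys is a sharper statement than the corollary asserts: you identify $\operatorname{coker}\theta$ as a direct sum of copies of $\mathbb Z/2\mathbb Z$ indexed by the pairs $i<j$ with $e_j$ even and $e_i/e_j$ odd, whereas the paper only establishes that the cokernel has exponent dividing $2$. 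The price is that you rely on the direct-sum decomposition of $\gamma_2(F)/\gamma_2(S)\gamma_3(F)$ and on the fact that each generator from Theorem \ref{fs4}(ii) lives in a single $(i,j)$-summand; both points are true and easy, but worth saying explicitly (and the degenerate case $e_i=0$ should be noted as harmless, since then both the generator and the condition trivialize).
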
\par\vspace{.25cm}
\proof
Suppose  $w\in D(4,\,\mathfrak f\mathfrak s)\cap (\gamma_2(S)\gamma_3(F))$. Then $w=uv,\ u\in \gamma_2(S),\ v\in \gamma_3(F)$. Since $u\in D(4,\,\mathfrak f\mathfrak s)$, it follows that $v\in \gamma_3(F)\cap D(4,\,\mathfrak f\mathfrak s)$, and therefore, by Lemma \ref{L1}, $v\in \gamma_4(F)\gamma_2(S).$ Hence $\theta $ is a monomorphism.

\par\vspace{.25cm}
The subgroup  $D(3,\,\mathfrak f\mathfrak s)$ is generated, modulo $\gamma_3(F)\gamma_2(S)$, by the elements\linebreak  $[x_j,\,x_i^{e_i}], \ j>i$. From the identification of $D(4,\,\mathfrak f\mathfrak s)$ (Theorem \ref{fs4}), note that $[x_j,\,x_i^{e_i}]^2\in D(4,\,\mathfrak f\mathfrak s)$. It thus follows  that coker $\theta$, which is isomorphic to  the quotient $$D(3,\,\mathfrak f\mathfrak s)/(D(4,\,\mathfrak f\mathfrak s)\gamma_3(F),$$ is an elementary abelian 2-group. $\Box$

\par\vspace{.25cm}
\begin{theorem}\label{typical} If  $F$ is a free group with ordered basis $\{x_1<\dots<x_m\},\linebreak  S=\langle x_1^{e_1},\,\ldots,\,x^{e_m},\,\gamma_2(F)\rangle$, $e_m|e_{m-1}|\ldots|e_1$, and  $R$ is a normal subgroup of $F$ satisfying $R\gamma_2(F)=S$, then the following identifications hold:
\par\vspace{.25cm}\noindent
$(i)$ $D(4,\,\mathfrak f\mathfrak r\mathfrak f))$ is the subgroup generated, modulo $\gamma_4(F)$,  by the elements of the type \begin{itemize}
\item
$[[x_j,\,x_i],\,x_k]^{e_i},  \ i,\,j,\,k\in \{1,\,\ldots\,,\,m\}, \ j>i;$
\item
$[[x_j,\,x_i],\,x_j]^e,\ j>i,\ e_j|e,\ e_i|2e.$
\end{itemize}
\par\vspace{.25cm}\noindent
$(ii)$ $D(4,\,\mathfrak r\mathfrak f\mathfrak r)$ is the subgroup generated, modulo $\gamma_4(F)$,  by the elements of the type \begin{itemize}
\item
$[[x_j,\,x_i],\,x_k]^{e_ie_k},  \ i,\,j,\,k\in \{1,\,\ldots\,,\,m\}, \ j>i;$
\item
$[[x_j,\,x_i],\,x_i]^e,\ j>i,\ (e_je_i)|e,\ e^2_i|2e.$\end{itemize}
\par\vspace{.25cm}\noindent
$(iii)$ $D(4, \,\mathfrak r^2 \mathfrak f+\mathfrak r \mathfrak f \mathfrak r + \mathfrak f \mathfrak r^2) = [[F, R], R]\gamma_4(F)$.
\par\vspace{.25cm}\noindent
$(iv)$ $D(4, \,\mathfrak f^2 \mathfrak r + \mathfrak r \mathfrak f^2)$ = the subgroup generated, modulo $\gamma_4(F)$,  by the elements $[[x_j, \,x_i],\, x_k]^e$, where
$j > i \leq k,  \,e_k | e$, if $k \neq i$,  and when $k = i$, then $e_ j | e$, and $e_i |2e.$
\end{theorem}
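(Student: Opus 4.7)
The plan is to follow the template established for Theorem~\ref{fs4}: first check, directly from the expansion of $[[u,v],w]-1$ modulo $\mathfrak f^4$, that the listed generators together with $\gamma_4(F)$ lie in the respective generalized dimension subgroup, and then show that conversely every element of $D(4,\mathfrak a)$ reduces modulo $\gamma_4(F)$ to such a product of generators. The non-trivial direction hinges on the observation that $R\le S$ forces $\mathfrak r\subseteq \mathfrak s$, so every ideal $\mathfrak a$ appearing in parts~(i)--(iv) satisfies $\mathfrak a\subseteq \mathfrak f\mathfrak s\cap \mathfrak s\mathfrak f$. Consequently
$$
D(4,\mathfrak a)\ \subseteq\ D(4,\mathfrak f\mathfrak s)\cap D(4,\mathfrak s\mathfrak f),
$$
and Theorem~\ref{fs4}(ii), together with its symmetric right-handed version, lets us represent any $w\in D(4,\mathfrak a)$, modulo $\gamma_4(F)\gamma_2(S)$, as a product of triple commutators $\prod[[x_j,x_i],x_k]^{c_{ijk}}$ with $j>i$ and $c_{ijk}\in\mathbb Z$; an argument in the spirit of Lemma~\ref{L1} then absorbs the $\gamma_2(S)$ factor. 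The problem thus reduces to determining which integer tuples $(c_{ijk})$ satisfy $w-1\in\mathfrak a+\mathfrak f^4$.

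This last step is a linear-algebra calculation in the free $\mathbb Z$-module $\mathfrak f^3/\mathfrak f^4$, which has basis the ordered monomials $(x_a-1)(x_b-1)(x_c-1)$. Expanding each triple commutator into this basis and using the key identity
$$
x_\ell^{e_\ell}-1\ \equiv\ e_\ell(x_\ell-1)+\binom{e_\ell}{2}(x_\ell-1)^2 \pmod{\mathfrak f^3},
$$
one describes the image of $\mathfrak a$ in $\mathfrak f^3/\mathfrak f^4$ explicitly. For ``off-diagonal'' triples (indices $i,j,k$ pairwise distinct) each $\mathfrak r$-slot contributes a single factor of $e_\ell$, yielding the clean constraints $e_k\mid c_{ijk}$ in~(i), $e_ie_k\mid c_{ijk}$ in~(ii), and the analogues in~(iv). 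Part~(iii) is the cleanest of all: every generator of $\mathfrak r^2\mathfrak f+\mathfrak r\mathfrak f\mathfrak r+\mathfrak f\mathfrak r^2$ carries two $\mathfrak r$-factors, so only the linear piece of each $x_\ell^{e_\ell}-1$ contributes to $\mathfrak f^3/\mathfrak f^4$, and the image matches exactly that of $[[F,R],R]-1$.

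The main obstacle is the ``diagonal'' triple commutators $[[x_j,x_i],x_j]$ and $[[x_j,x_i],x_i]$, which are responsible for the curious asymmetric conditions ``$e_j\mid e$ and $e_i\mid 2e$'' in~(i), ``$(e_ie_j)\mid e$ and $e_i^2\mid 2e$'' in~(ii), and the mixed $k=i$ clause in~(iv). These arise because the quadratic correction $\binom{e_\ell}{2}(x_\ell-1)^2$ in $x_\ell^{e_\ell}-1$ lands in the same basis element of $\mathfrak f^3/\mathfrak f^4$ as a diagonal triple commutator, producing the stray factor of~$2$. Care is also needed to manage the Jacobi relations among triple commutators---the $[[x_j,x_i],x_k]$ with $j>i$ are \emph{not} a free $\mathbb Z$-basis for $\gamma_3(F)/\gamma_4(F)$---so as not to double-count; fixing the ordering $j>i$ throughout and treating each diagonal case separately will do the job.
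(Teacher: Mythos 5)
Your overall skeleton -- verify the listed generators directly, reduce an arbitrary $w\in D(4,\mathfrak a)$ modulo $\gamma_4(F)$ to a product of triple commutators with integer exponents, then do linear algebra in the free $\mathbb Z$-module $\mathfrak f^3/\mathfrak f^4$ -- is exactly the paper's strategy (the paper carries it out in detail for (i), extracting the coefficients via left and right partial derivatives and finishing with the Hall--Witt identity, and declares (ii)--(iv) similar). But two of your intermediate mechanisms are wrong. First, the reduction to triple commutators cannot be routed through Theorem \ref{fs4}(ii): that theorem expresses elements of $D(4,\mathfrak f\mathfrak s)$ modulo $\gamma_2(S)\gamma_4(F)$ as products of \emph{weight-two} commutators $[x_i^{e_i},x_j]^{a_{ij}}$, not triple commutators, and the error term $\gamma_2(S)$ cannot be ``absorbed in the spirit of Lemma \ref{L1}'' because $\gamma_2(S)\not\leq D(4,\mathfrak f\mathfrak r\mathfrak f)$ in general (e.g.\ $[x_1^{e_1},x_2^{e_2}]-1\equiv e_1e_2[(x_1-1)(x_2-1)-(x_2-1)(x_1-1)]$ modulo $\mathfrak f^3$, which does not lie in $\mathfrak f\mathfrak s\mathfrak f+\mathfrak f^4\subseteq\mathfrak f^3$). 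The correct reduction is much simpler and is the one the paper uses: each ideal $\mathfrak a$ in (i)--(iv) is a sum of products of three ideals contained in $\mathfrak f$, hence $\mathfrak a\subseteq\mathfrak f^3$ and $D(4,\mathfrak a)\leq F\cap(1+\mathfrak f^3)=\gamma_3(F)$, after which one writes $w$ modulo $\gamma_4(F)$ in the Hall basis $[[x_j,x_i],x_k]$, $j>i\leq k$, of $\gamma_3(F)/\gamma_4(F)$.

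Second, your diagnosis of the source of the factor $2$ in the diagonal conditions is incorrect, which indicates the decisive computation has not actually been done. Precisely because each $\mathfrak a$ is a product of three subideals of $\mathfrak f$, only the \emph{linear} part $e_\ell(x_\ell-1)$ of each $\mathfrak s$-slot survives modulo $\mathfrak f^4$; the quadratic correction $\binom{e_\ell}{2}(x_\ell-1)^2$ is annihilated by the flanking factors and plays no role anywhere in (i)--(iv) (it matters in Theorem \ref{fs4}, where the ideal $\mathfrak f\mathfrak s$ has only two factors, but not here). The $2$ in ``$e_i\mid 2e$'' and ``$e_i^2\mid 2e$'' comes instead from the expansion $[[x_j,x_i],x_k]-1\equiv c(x_k-1)-(x_k-1)c$ with $c=(x_j-1)(x_i-1)-(x_i-1)(x_j-1)$: when $k\in\{i,j\}$ the two halves contribute the \emph{same} cubic monomial, doubling its coefficient (this is visible in the paper's step yielding $e_i\mid 2a_{ijj}$). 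Getting this source right matters, since it determines which monomials carry the factor $2$ and hence the exact divisibility conditions in the statement. Finally, your worry about the Jacobi relations is legitimate; the paper handles the resulting redundancy among $[[x_j,x_i],x_k]$ and $[[x_k,x_i],x_j]$ via the condition $e_i\mid(a_{ijk}+a_{ikj})$ and the Hall--Witt identity, a step your sketch should make explicit rather than defer.
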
\par\vspace{.25cm}\proof (i)
Note that
$D(4,\,\mathfrak f\mathfrak r\mathfrak f)=D(4,\,\mathfrak f\mathfrak s\mathfrak f)\leq \gamma_3(F).$
We first check that for $i,\,j,\,k\in \{1,\,\ldots\,,\, m\}$, the simple commutator \begin{equation}\label{frf}[[x_j,\,x_i],\,x_k]^{e_i}\in D(4,\,\mathfrak f\mathfrak s\mathfrak f),\  \text{provided}\ j>i.\end{equation} Modulo $\mathfrak f^4$, we have \begin{multline}\label{frf1}[[x_j,\,x_i],\,x_k]^{e_i}-1=e_i\{[x_j-1)(x_i-1)-(x_i-1)(x_j-1)](x_k-1)\\-(x_k-1)[(x_j-1)(x_i-1)-(x_i-1)(x_j-1)]\}.\end{multline}Since $j>i$, both $e_i(x_i-1)$ and $e_i(x_j-1)$ lie in $\mathfrak s\mathbb Z[F]+\mathfrak f^2$, and consequently (\ref{frf}) holds. Next, consider $[[x_j,\,x_i[,\,x_j]$,  $j>i$, and let $e$ be an integer such that $e_j|e,\ e_i|2e$. Then, modulo $\mathfrak f^4$, we have
\begin{multline}\label{frf14}
[[x_j,\,x_i],\,x_j]^e-1=\\e\{[x_j-1)(x_i-1)-(x_i-1)(x_j-1)](x_j-1)\\-(x_j-1)[(x_j-1)(x_i-1)-(x_i-1)(x_j-1)]\}.
\end{multline}Given $e_i|2e$ and $e_j|e$, it follows that the above expression lies in $\mathfrak f^4+\mathfrak f\mathfrak s\mathfrak f$.

\par \vspace{.5cm}
Let $X$ be the subgroup of $F$ generated by the elements of the type $[[x_j,\,x_i],\,x_k]^{e_i},$ $ i,\,j,\,k\in \{1,\ \ldots\ ,\ m\}, \  j>i$, and $[[x_j,\,x_i],\,x_j]^e,\ j>i,\ e_j|e,\ e_i|2e.$\par\vspace{.5cm} Recall that the quotient $\gamma_3(F)/\gamma_4(F)$ is a free abelian group with basis $$\{[[x_j,\,x_i],\,x_k]\gamma_4(F)\,|\, 1\,\leq \,i,\,j,\,k\,\leq\, m, \  j>i\,\leq \,k\}.$$
Let $w\in D(4,\,\mathfrak f\mathfrak s\mathfrak f)$. Then, modulo $\gamma_4(F)$,  $w=\prod_{i=1}^{m-1} w_i$ with $$w_i=\prod_{j>i\leq k} [[x_j,\,x_i],\,x_k]^{a_{ijk}},\ a_{ijk}\in \mathbb Z.$$By descending induction on $i$, it follows that $w_i\in D(4,\,\mathfrak f\mathfrak s\mathfrak f) $ for each \linebreak $i=1,\,2,\,\ldots,\,m-1$. Now,  modulo $ \mathfrak f\mathfrak s\mathfrak f+\mathfrak f^4$, \begin{multline}\label{frf2}W_i:=w_i-1=\sum_{j>i\leq k}a_{ijk}\{[(x_j-1)(x_i-1)-(x_i-1)(x_j-1)](x_k-1)\\ -(x_k-1)[(x_j-1)(x_i-1)-(x_i-1)(x_j-1)]\}.\end{multline}
Considering the left partial derivative of $W_i$ with respect to $x_i$, we have
\begin{multline}\label{frf3}{_{x_i}}W_i=
\sum_{j>i}a_{iji}[(x_j-1)(x_i-1)-2(x_i-1)(x_j-1)]\\-\sum_{j>i,\,k>i}a_{ijk}(x_k-1)(x_j-1)\in \mathfrak f^3+\mathfrak f\mathfrak s.
\end{multline}\par\vspace{.25cm}\noindent
Considering the right partial derivative of ${_{x_i}}W_i$ with respect to $x_i$, we have
\begin{equation}\label{frf4}
2a_{iji}(x_j-1)\in \mathfrak f^2+\mathfrak s\mathbb Z[F],
\end{equation}and, therefore
\begin{equation}\label{frf7}
\sum_{j>i}a_{iji}(x_j-1)(x_i-1)-\sum_{j>i,\,k>i}a_{ijk}(x_k-1)(x_j-1)\in \mathfrak f^3+\mathfrak f\mathfrak s.
\end{equation}Since the second term in (\ref{frf7})  does not contain $x_i$, it  follows that
\begin{equation}\label{frf5}
\sum_{j>i}a_{iji}(x_j-1)(x_i-1)\in  \mathfrak f^3+\mathfrak f\mathfrak s\end{equation} and
\begin{equation}\label{frf6}\sum_{j>i,\,k>i}a_{ijk}(x_k-1)(x_j-1)\in \mathfrak f^3+\mathfrak f\mathfrak s.\end{equation} From the inclusion (\ref{frf5}), it follows that  $\sum_{j>i}a_{iji}(x_i-1)\in \mathfrak f^2+\mathfrak s\mathbb Z[F]$, and \linebreak consequently
\begin{equation}\label{frf8}e_i|a_{iji}\ \text{for all}\  j>i.\end{equation}It thus follows that \begin{equation}\label{frf13}
[[x_j,\,x_i],\,x_i]^{a_{iji}}\in X\gamma_4(F).
\end{equation}
On taking right partial derivative with respect to $x_k,\ k>i$, the inclusion (\ref{frf6}) shows that we have
\begin{equation}
\sum_{j>i}a_{ijk}(x_j-1)\in \mathfrak f^2+\mathfrak s\mathbb Z[F],
\end{equation}implying that
\begin{equation}\label{frf9}
e_j|a_{ijk}\ \text{for} \ j>i,\,k>i.
\end{equation}
Plugging the foregoing  divisibility conditions, obtained so far, back in (\ref{frf2}), we see that \begin{multline}\label{frf10}
\sum_{j>i<k,\,j\neq k }(a_{ijk}+a_{ikj})[(x_j-1)(x_i-1)(x_k-1)+(x_k-1)(x_i-1)(x_j-1)]\\+\sum_{j>i}2a_{ijj}[(x_j-1)(x_i-1)(x_j-1)]\in \mathfrak f^4+\mathfrak f\mathfrak s\mathfrak f.
\end{multline}Considering the left partial derivative with respect to $x_k$,   we have
\begin{equation}\label{frf11}
\sum_{j>i}(a_{ijk}+a_{ikj})(x_j-1)(x_i-1)\in \mathfrak f^3+\mathfrak f\mathfrak s.
\end{equation} It thus follows that
\begin{equation}
e_i|(a_{ijk}+a_{ikj})\ \text{for all}\ j>i,\,k>i.
\end{equation}The inclusion (\ref{frf10}) thus reduces to \begin{equation}
\sum_{j>i}2a_{ijj}(x_j-1)(x_i-1)(x_j-1)\in \mathfrak f^4+\mathfrak f\mathfrak s\mathfrak f,
\end{equation} implying that
\begin{equation}
e_i|2a_{ijj}\ \text{for all}\ j>i.
\end{equation}Consequently \begin{equation}\label{frf15}
[[x_j,\,x_i],\,x_j]^{a_{ijj}}\in X\gamma_4(F).
\end{equation}
For $j>i\leq k,  \ j\neq k$, we have $e_i|(a_{ijk}+a_{ikj})$, therefore, modulo $X\gamma_4(F)$, \begin{multline}\label{frf12}
[[x_j,\,x_i],\,x_k]^{a_{ijk}}.[[x_k,\,x_i],\,x_j]^{a_{ikj}}\\=[[x_j,\,x_i],\,x_k]^{a_{ijk}}.[[x_k,\,x_i],\,x_j]^{-a_{ijk}}\\=[[x_k,\,x_j],\,x_i]^{-a_{ijk}}=1,\end{multline} provided $k>j.$ Here the first equality holds because $e_i|(a_{ijk}+a_{ikj})$, and\linebreak  $[[x_k,x_i],x_j]^{e_i}\in X\gamma_4(F)$, while the second equality holds in view of Hall-Witt identity, and the last equality holds because $e_j|a_{ijk}$. Similar argument shows that the same conclusion holds if $j>k$.
\par\vspace{.25cm}The foregoing analysis shows that $w_i\in X\gamma_4(F)$, and thus  the proof of (i) is complete.

\par\vspace{,5cm}
\noindent The proofs of (ii)-(iv) are similar and  so  we omit the details.  $\Box$\par\vspace{.5cm}
The preceding theorem identifies a set of generators for $D(4,\,\mathfrak f\mathfrak r\mathfrak f)/\gamma_4(F) $
in terms of the basic commutators $[[x_j,\,x_i],\,x_k], \
j>i,\,i\leq k$ of weight three.  An easy calculation on these
generators shows that another identification can be given as
follows:
\par\vspace{.25cm}
\begin{theorem}\label{frfth}
If  $F$ is a free group with ordered basis $\{x_1<\dots<x_m\},\ \
S=\langle x_1^{e_1},\,\ldots,\,x^{e_m},\,\gamma_2(F)\rangle$,
$e_m|e_{m-1}|\ldots|e_1$, and  $R$ is a normal subgroup of $F$ satisfying $R\gamma_2(F)=S$, then
the  subgroup
$D(4,\,\mathfrak f\mathfrak r\mathfrak f)$ is generated  by the set $X$  consisting of $\gamma_4(F)$ and the   elements of the type
$[[x_j,\,x_i],\,x_k]^e,\  j>i,\ 1\leq k\leq m$ satisfying\\
\begin{quote} \begin{itemize}
\item $e_i\,|\,e,\ \text{if}\ k\neq j$ \item $e_j\,|\,e,\
e_i\,|\,2e,\ \text{k=j}$

\end{itemize}\end{quote}
\end{theorem}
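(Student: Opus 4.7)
The plan is to establish Theorem \ref{frfth} by a direct two-way comparison with the generating set already obtained in Theorem \ref{typical}(i), exploiting the fact that both descriptions are expressed in terms of the basic commutators $[[x_j,\,x_i],\,x_k]\gamma_4(F)$, which form a $\mathbb Z$-basis of $\gamma_3(F)/\gamma_4(F)$.

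For the inclusion $\langle X\rangle\leq D(4,\,\mathfrak f\mathfrak r\mathfrak f)$, I would take a typical generator $[[x_j,\,x_i],\,x_k]^e$ of $X$. In the case $k\neq j$, the hypothesis $e_i\mid e$ lets me write $e=qe_i$, so the element is the $q$-th power of $[[x_j,\,x_i],\,x_k]^{e_i}$, which lies in $D(4,\,\mathfrak f\mathfrak r\mathfrak f)\gamma_4(F)$ by the first family of generators in Theorem \ref{typical}(i). In the case $k=j$, the constraints $e_j\mid e$ and $e_i\mid 2e$ are precisely those imposed on the second family of generators in Theorem \ref{typical}(i), so the element again lies in $D(4,\,\mathfrak f\mathfrak r\mathfrak f)\gamma_4(F)$.

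For the reverse inclusion $D(4,\,\mathfrak f\mathfrak r\mathfrak f)\leq \langle X\rangle\gamma_4(F)$, I would verify that each generator supplied by Theorem \ref{typical}(i) already belongs to $X$. A commutator $[[x_j,\,x_i],\,x_k]^{e_i}$ with $k\neq j$ trivially satisfies $e_i\mid e_i$, hence lies in $X$. When $k=j$, the exponent $e=e_i$ satisfies $e_j\mid e_i$ (by the divisibility chain $e_m\mid\ldots\mid e_1$) together with $e_i\mid 2e_i$, which places $[[x_j,\,x_i],\,x_j]^{e_i}$ in $X$ as well. The remaining generators $[[x_j,\,x_i],\,x_j]^e$ with $e_j\mid e$, $e_i\mid 2e$ are of the form specified in $X$ by definition.

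I do not anticipate any substantial obstacle: the theorem is essentially a cleaner packaging of Theorem \ref{typical}(i) that removes the overlap between its two families. The only observation doing any real work is that the divisibility chain $e_m\mid e_{m-1}\mid\ldots\mid e_1$ forces $e=e_i$ to satisfy both $k=j$ constraints of $X$ automatically, which is precisely what allows the $k=j$ part of the first family in Theorem \ref{typical}(i) to be absorbed into the $k=j$ case of the new description.
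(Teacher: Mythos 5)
Your proposal is correct and follows exactly the route the paper intends: the paper derives Theorem \ref{frfth} from Theorem \ref{typical}(i) by what it calls ``an easy calculation on these generators,'' and your two-way comparison (powers of $[[x_j,\,x_i],\,x_k]^{e_i}$ for $k\neq j$, identical constraints for $k=j$, and the observation that $j>i$ forces $e_j\mid e_i$ so the $k=j$ members of the first family are absorbed) is precisely that calculation spelled out. No gaps.
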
\
\par\vspace{.25cm}
Clearly the subgroup $[[R,\,R],\,F]\subset 1+\mathfrak f\mathfrak r\mathfrak f$, and is generated, modulo $\gamma_4(F)$, by the set $Y$ consisting of the   elements $$[[x_j,\,x_i],x_k]^{e_ie_j},\ j>i,\ 1\leq k\leq m.$$ Hence we have

\par\vspace{.5cm}
\begin{corollary} \label{der1}
 $D(4, \,\mathfrak f\mathfrak r\mathfrak f)/[[R,\,R],\,F]\gamma_4(F) \cong \langle X\rangle/\langle Y\rangle\gamma_4(F).$
\end{corollary}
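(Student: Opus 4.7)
The plan is to read this corollary as a bookkeeping consequence of Theorem \ref{frfth}, combined with two short verifications: that $[[R,R],F]\gamma_4(F)$ is contained in $D(4,\mathfrak f\mathfrak r\mathfrak f)$, and that $[[R,R],F]\gamma_4(F)$ coincides with $\langle Y\rangle\gamma_4(F)$.

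First I would verify the containment $[[R,R],F]\subseteq 1+\mathfrak f\mathfrak r\mathfrak f$ asserted just above the corollary. For $a\in[R,R]$ and $f\in F$, the identity
$$[a,f]-1=a^{-1}f^{-1}\bigl\{(a-1)(f-1)-(f-1)(a-1)\bigr\},$$
combined with $[R,R]-1\subseteq\mathfrak r^2\subseteq\mathfrak f\mathfrak r\cap\mathfrak r\mathfrak f$, places both summands inside the braces into $\mathfrak f\mathfrak r\mathfrak f$, and the two-sided nature of this ideal absorbs the left factor $a^{-1}f^{-1}$. Together with the trivial inclusion $\gamma_4(F)\subseteq 1+\mathfrak f^4$, this yields $[[R,R],F]\gamma_4(F)\subseteq D(4,\mathfrak f\mathfrak r\mathfrak f)$.

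Next I would identify $[[R,R],F]$ modulo $\gamma_4(F)$ with $\langle Y\rangle\gamma_4(F)$. Since $R\gamma_2(F)=S=\langle x_1^{e_1},\ldots,x_m^{e_m},\gamma_2(F)\rangle$, for each $i$ one may choose $r_i\in R$ with $r_i\equiv x_i^{e_i}\pmod{\gamma_2(F)}$; then $R$ is generated modulo $R\cap\gamma_2(F)$ by $r_1,\ldots,r_m$. Now $[[R,R],F]\gamma_4(F)/\gamma_4(F)$ sits inside the abelian group $\gamma_3(F)/\gamma_4(F)$, where the iterated commutator is trilinear. Moreover $[\gamma_2(F),\gamma_2(F)]\subseteq\gamma_4(F)$ and $[\gamma_3(F),F]=\gamma_4(F)$, so in forming the generators $[[a,b],c]$ of $[[R,R],F]$ one may replace $a,b\in R$ by their chosen lifts $r_i,r_j$ and $c\in F$ by a basis element $x_k$ without changing the class modulo $\gamma_4(F)$. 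Standard commutator identities then give
$$[[r_j,r_i],x_k]\equiv[[x_j^{e_j},x_i^{e_i}],x_k]\equiv[[x_j,x_i],x_k]^{e_ie_j}\pmod{\gamma_4(F)},$$
so $[[R,R],F]\gamma_4(F)=\langle Y\rangle\gamma_4(F)$.

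Combined with Theorem \ref{frfth}'s description $D(4,\mathfrak f\mathfrak r\mathfrak f)=\langle X\rangle$ and the observation that $\gamma_4(F)\subseteq\langle X\rangle$, the corollary follows by passing to the quotient. The only mildly delicate point is the reduction in the second step, where one must check that the corrections arising from writing $r_i=x_i^{e_i}c_i$ with $c_i\in\gamma_2(F)$ contribute only elements of $\gamma_4(F)$; this is immediate since each such correction produces a commutator of weight three with at least one $\gamma_2(F)$-entry, hence lies in $[\gamma_2(F),F,F]+[\gamma_2(F),\gamma_2(F)]\subseteq\gamma_4(F)$.
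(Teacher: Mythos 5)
Your proposal is correct and follows essentially the same route as the paper, which derives the corollary immediately from Theorem \ref{frfth} together with the two observations you verify, namely $[[R,\,R],\,F]\subseteq 1+\mathfrak f\mathfrak r\mathfrak f$ and $[[R,\,R],\,F]\gamma_4(F)=\langle Y\rangle\gamma_4(F)$. The paper simply labels these observations as ``clear,'' and your write-up supplies the standard details (the commutator identity placing $[a,f]-1$ in $\mathfrak f\mathfrak r\mathfrak f$, and the trilinearity of $[[a,b],c]$ modulo $\gamma_4(F)$) correctly.
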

\par\vspace{.25cm}
\begin{theorem}
The subgroup $D(4,\,\mathfrak r^2\mathfrak f)$ is generated, modulo $\gamma_4(F)$,  by the elements $[[x_j,\,x_i],\,x_k]^{\ell_{ijk}},\ 1\leq i,\,j,\,k\leq m,\ j>i\leq k,$ where \begin{quote}$ \ell_{ijk}=\begin{cases}\operatorname{lcm}(e_ie_j,\,e_ie_k,\,e_je_k),\ k\neq i\\
e_ie_j,\ k=i\end{cases}.$\end{quote}
\end{theorem}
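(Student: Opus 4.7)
The plan is to proceed in the same spirit as the proof of Theorem~\ref{typical}, translating the condition $w \in D(4,\mathfrak{r}^2\mathfrak{f})$ into divisibility constraints on the exponents when $w$ is written as a product of basic commutators. Since $\mathfrak{r}^2\mathfrak{f} \subseteq \mathfrak{r}^2\mathfrak{f}+\mathfrak{r}\mathfrak{f}\mathfrak{r}+\mathfrak{f}\mathfrak{r}^2$, Theorem~\ref{typical}(iii) immediately gives $D(4,\mathfrak{r}^2\mathfrak{f}) \leq [[F,R],R]\gamma_4(F) \leq \gamma_3(F)$. Hence every $w \in D(4,\mathfrak{r}^2\mathfrak{f})$ admits a presentation $w \equiv \prod_{j>i\leq k}[[x_j,x_i],x_k]^{a_{ijk}} \pmod{\gamma_4(F)}$ with $a_{ijk}\in\mathbb{Z}$, using that $\gamma_3(F)/\gamma_4(F)$ is free abelian on the basic commutators.

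The second step identifies the image of $\mathfrak{r}^2\mathfrak{f}$ in $\mathfrak{f}^3/\mathfrak{f}^4$. Because $R\gamma_2(F)=S$, the image of $\mathfrak{r}$ in $\mathfrak{f}/\mathfrak{f}^2\cong F_{ab}$ coincides with that of $\mathfrak{s}$, namely $\bigoplus_{i} e_i\mathbb{Z}(x_i-1)+\mathfrak{f}^2$. Multiplying three times and using that $\mathfrak{f}^3/\mathfrak{f}^4$ is free abelian on the products $(x_a-1)(x_b-1)(x_c-1)$, the image of $\mathfrak{r}^2\mathfrak{f}$ is shown to be exactly $\bigoplus_{a,b,c}e_ae_b\mathbb{Z}(x_a-1)(x_b-1)(x_c-1)$. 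Thus $w\in D(4,\mathfrak{r}^2\mathfrak{f})$ if and only if, in the expansion of $w-1$ modulo $\mathfrak{f}^4$, the coefficient of every basis element $(x_a-1)(x_b-1)(x_c-1)$ is divisible by $e_ae_b$.

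Using the standard congruence
\[
[[x_j,x_i],x_k]-1 \equiv \bigl[(x_j-1)(x_i-1)-(x_i-1)(x_j-1)\bigr](x_k-1)-(x_k-1)\bigl[(x_j-1)(x_i-1)-(x_i-1)(x_j-1)\bigr] \pmod{\mathfrak{f}^4},
\]
the contribution of $[[x_j,x_i],x_k]^{a_{ijk}}$ lies in the span of basis elements whose three-letter multiset equals $\{x_i,x_j,x_k\}$; hence the divisibility constraints decouple across multisets. When $i,j,k$ are three distinct values, exactly two basic commutators contribute to the six relevant basis elements, and the resulting system---together with extra congruences on sums like $a_{ijk}+a_{ikj}$---yields the combined condition $\operatorname{lcm}(e_ie_j,e_ie_k,e_je_k)\mid a_{ijk}$. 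When $k=i$, only $[[x_j,x_i],x_i]$ contributes to the three basis elements over $\{x_i,x_i,x_j\}$, and the three conditions collapse to the stated exponent; the case $k=j$ is analogous.

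The forward direction is then a direct verification: once $\ell_{ijk}$ is chosen so that every divisibility condition produced by its commutator's expansion is satisfied, $[[x_j,x_i],x_k]^{\ell_{ijk}}-1$ automatically lies in $\mathfrak{r}^2\mathfrak{f}+\mathfrak{f}^4$. The main obstacle is the bookkeeping in the three-distinct-index case, where two commutators contribute to shared basis elements; as in the proof of Theorem~\ref{typical}(i), the Hall--Witt identity will likely be needed to rearrange products such as $[[x_j,x_i],x_k]^{a_{ijk}}[[x_k,x_i],x_j]^{a_{ikj}}$ modulo $\gamma_4(F)$ and to confirm that the combined constraints are precisely the stated $\operatorname{lcm}$ with no hidden divisibility imposed.
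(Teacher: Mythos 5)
Your strategy is essentially the paper's own proof in different packaging: the paper extracts the divisibility conditions by iterated left partial derivatives, while you identify $(\mathfrak r^2\mathfrak f+\mathfrak f^4)/\mathfrak f^4$ inside $\mathfrak f^3/\mathfrak f^4$ as $\bigoplus_{a,b,c}e_ae_b\mathbb{Z}\,(x_a-1)(x_b-1)(x_c-1)$ and read off the condition monomial by monomial; these amount to the same thing, and your description of the image is correct (using that $\mathfrak r$ and $\mathfrak s$ have the same degree-one part because $R\gamma_2(F)=S$). The reduction to $\gamma_3(F)$, the decoupling over multisets, and the three-distinct-index case all work: there the six coefficient conditions do combine to $\operatorname{lcm}(e_ie_j,\,e_ie_k,\,e_je_k)$ dividing each of $a_{ijk}$ and $a_{ikj}$ separately, and since the conditions land on the individual exponents you do not in fact need any Hall--Witt rearrangement --- you are computing in the free abelian group $\gamma_3(F)/\gamma_4(F)$.

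The gap is the sentence asserting that for $k=i$ ``the three conditions collapse to the stated exponent.'' They do not. For $[[x_j,\,x_i],\,x_i]^{a}$ the expansion modulo $\mathfrak f^4$ is $a\{(x_j-1)(x_i-1)^2-2(x_i-1)(x_j-1)(x_i-1)+(x_i-1)^2(x_j-1)\}$, so your own criterion requires $e_je_i\mid a$, $e_ie_j\mid 2a$, and --- from the monomial $(x_i-1)(x_i-1)(x_j-1)$ --- $e_i^2\mid a$. Since $e_j\mid e_i$ for $j>i$, these collapse to $e_i^2\mid a$, i.e.\ to the uniform formula $\operatorname{lcm}(e_ie_j,\,e_ie_i,\,e_je_i)$, which is strictly larger than the stated $\ell_{iji}=e_ie_j$ whenever $1<e_j<e_i$. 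Concretely, for $m=2$, $e_1=4$, $e_2=2$, the coefficient of $(x_1-1)^2(x_2-1)$ in $[[x_2,\,x_1],\,x_1]^{8}-1$ is $8$, whereas the image of $\mathfrak r^2\mathfrak f$ on that basis vector is $16\mathbb{Z}$; so $[[x_2,\,x_1],\,x_1]^{8}\notin D(4,\,\mathfrak r^2\mathfrak f)$, and no adjustment by $\gamma_4(F)$ changes this, since $\gamma_4(F)-1\subseteq\mathfrak f^4$. Thus your method, carried through honestly, proves the statement with $\ell_{iji}$ replaced by $e_i^2$ and cannot reach the statement as printed; you would need either to exhibit some other element of $1+\mathfrak r^2\mathfrak f+\mathfrak f^4$ congruent to $[[x_j,\,x_i],\,x_i]^{e_ie_j}$ modulo $\gamma_4(F)$ (ruled out by the computation above) or to correct the exponent in the $k=i$ case. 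Note that the published proof is vulnerable at exactly the same spot: it discards the $k=i$ terms before taking the left derivative with respect to $x_j$, and so never tests the coefficient of $(x_i-1)^2(x_j-1)$.
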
\par\vspace{.25cm}
\proof As in the proof of the previous result, we have $$D(4,\,\mathfrak r^2\mathfrak f)=D(4,\,\mathfrak s^2\mathfrak f)\leq \gamma_3(F).$$It is easy to see that the triple commutators occurring in the statement of the theorem  lie in $D(4,\,\mathfrak s^2\mathfrak f)$. Further, if $w\in D(4,\,\mathfrak s^2\mathfrak f)$, then $$w=\prod_{i=1}^{m-1}w_i,\quad w_i=[[x_j,\,x_i],x_k]^{a_{ijk}}, \ j>i\leq k,\ w_i\in D(4,\,\mathfrak s^2\mathfrak f).$$
Modulo $\mathfrak s^2\mathfrak f+\mathfrak f^4$, we  have
\begin{multline}\label{221}w_i-1=\sum_{j>i\leq k}a_{ijk}\{[(x_j-1)(x_i-1)-(x_i-1)(x_j-1)](x_k-1)\\ -(x_k-1)[(x_j-1)(x_i-1)-(x_i-1)(x_j-1)]\}.\end{multline}
Considering the left partial derivative with respect to  $x_i$, we have
\begin{multline}\label{231}
\sum_{j>i}a_{iji}\{(x_j-1)(x_i-1)-2(x_i-1)(x_j-1)\}\\-\sum_{j>i<k}a_{ijk}(x_k-1)(x_j-1)\in \mathfrak s^2+\mathfrak f^3.
\end{multline}
Observe that $\mathfrak f^2/\mathfrak f^3$ is a free abelian group with basis $$(x_i-1)(x_j-1)+\mathfrak f^3, \ 1\leq i, \,j\leq m,$$ and $\mathfrak s^2+\mathfrak f^3$ is generated by the elements $$e_ie_j(x_i-1)(x_j-1)+\mathfrak f^3, 1\leq i, \,j\leq m.$$ Thus  the inclusion (\ref{231}) yields
\begin{equation}
e_ie_j|a_{iji},\quad e_je_k|a_{ijk}\ \text{for}\ j>i<k,
\end{equation}and in turn \begin{equation}
\sum_{j>i<k}a_{ijk}(x_k-1)(x_j-1)\in \mathfrak s^2+\mathfrak f^3.\end{equation}
The above inclusion then yields \begin{equation}
e_je_k|a_{ijk}\ \text{for}\ j>i<k.
\end{equation}
The congruence (\ref{221}) now implies that  \begin{multline}
\sum_{j>i< k}a_{ijk}\{[(x_j-1)(x_i-1)-(x_i-1)(x_j-1)](x_k-1)\\ +(x_k-1)(x_i-1)(x_j-1)]\}\in \mathfrak s^2\mathfrak f+\mathfrak f^4.
\end{multline}
Considering the left partial derivative with respect to  $x_j$, we obtain \begin{equation}\label{271}
a_{ijj}[(x_j-1)(x_i-1)-(x_i-1)(x_j-1)]+\sum_{j>i< k}a_{ijk}(x_k-1)(x_i-1)\in \mathfrak s^2+\mathfrak f^3.
\end{equation}
Thus we have \begin{equation}
e_ie_j|a_{ijj},\ e_ie_k|a_{ijk}\ \text{for}\ j>i<k,
\end{equation}forcing (\ref{271}) to yield \begin{equation}
e_ie_j|a_{ijk}, \text{for}\ j>i<k.
\end{equation}
The various divisibility conditions derived above for the integers $a_{ijk} $ clearly suffice for the claimed assertion. $\Box$
\par\vspace{.25cm}
\begin{theorem}
$(i)$ $D(3,\,\mathfrak s^2)\equiv \gamma_2(S)\mod \gamma_3(F).$
$(ii)$
$D(4,\,\mathfrak s^2\mathbb Z[F])\equiv \gamma_2(S)\mod \gamma_4(F).$
\end{theorem}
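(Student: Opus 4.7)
The plan is to establish each direction separately. The forward inclusions $\gamma_2(S)\gamma_3(F)\subseteq D(3,\mathfrak s^2)$ and $\gamma_2(S)\gamma_4(F)\subseteq D(4,\mathfrak s^2\mathbb Z[F])$ are immediate from $\gamma_2(S)-1\subseteq\mathfrak s^2$ and $\gamma_n(F)-1\subseteq\mathfrak f^n$. For the reverse in $(i)$, I will exploit $D(3,\mathfrak s^2)\subseteq D(3,\mathfrak f^2)=\gamma_2(F)$ to write $w\equiv\prod_{j>i}[x_j,x_i]^{a_{ij}}\pmod{\gamma_3(F)}$ and expand $w-1\equiv\sum a_{ij}\{(x_j-1)(x_i-1)-(x_i-1)(x_j-1)\}\pmod{\mathfrak f^3}$. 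Introducing $A_i:=x_i^{e_i}-1$ and $B_{jk}:=[x_k,x_j]-1$ as the natural generators of $\mathfrak s$, only the products $A_iA_j\equiv e_ie_j(x_i-1)(x_j-1)\pmod{\mathfrak f^3}$ contribute in degree two (products involving $B_{jk}$ lie in $\mathfrak f^3$); comparing coefficients in the free basis of $F_{ab}^{\otimes 2}\cong\mathfrak f^2/\mathfrak f^3$ forces $e_ie_j\mid a_{ij}$, and then $[x_j^{e_j},x_i^{e_i}]\equiv[x_j,x_i]^{e_ie_j}\pmod{\gamma_3(F)}$ lifts the conclusion to $w\in\gamma_2(S)\gamma_3(F)$.

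For $(ii)$, I plan to first use $(i)$ to factor $w\in D(4,\mathfrak s^2\mathbb Z[F])$ as $w=uv$ with $u\in\gamma_2(S)$ and $v\in\gamma_3(F)$; the two-sidedness of $\mathfrak s^2$ together with $u-1\in\mathfrak s^2$ then yields $v\in\gamma_3(F)\cap D(4,\mathfrak s^2)$, reducing the claim to $\gamma_3(F)\cap D(4,\mathfrak s^2)\subseteq\gamma_2(S)\gamma_4(F)$. Writing $v\equiv\prod_{j>i\leq k}[[x_j,x_i],x_k]^{c_{ijk}}\pmod{\gamma_4(F)}$ and setting $\Delta_{i,j,k}:=[[x_j,x_i],x_k]-1$ modulo $\mathfrak f^4$, the condition $v-1\equiv\sum c_{ijk}\Delta_{i,j,k}\in\mathfrak s^2+\mathfrak f^4$ becomes a membership condition in the subgroup $K\subseteq F_{ab}^{\otimes 3}\cong\mathfrak f^3/\mathfrak f^4$ that is the image of $\mathfrak s^2\cap\mathfrak f^3$. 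Enumerating products of $A_i$ and $B_{jk}$ modulo $\mathfrak f^4$, and writing $\omega_{abc}:=(x_a-1)(x_b-1)(x_c-1)$, $K$ is generated by $e_ae_b\omega_{abc}$, $e_ae_c\omega_{abc}$, $e_be_c\omega_{abc}$ (from $A_iA_j(x_l-1)$, $A_i(x_l-1)A_j$, $(x_l-1)A_iA_j$ respectively), together with $e_a(\omega_{akj}-\omega_{ajk})$ and $e_a(\omega_{kja}-\omega_{jka})$ (from $A_iB_{jk}$ and $B_{jk}A_i$); all other products, such as $B_{jk}B_{lm}$, lie in $\mathfrak f^4$.

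The final step will be to extract divisibility conditions on the integers $c_{ijk}$ by iterated left and right partial derivatives with respect to the $x_i$'s, in the style of the proofs of Theorems \ref{fs4} and \ref{typical}. After Jacobi rearrangement in $\gamma_3(F)/\gamma_4(F)$, these conditions should match precisely those defining $\gamma_2(S)\gamma_4(F)\cap\gamma_3(F)/\gamma_4(F)$, which is generated by the images of the $\gamma_2(S)$-commutators $[x_c^{e_c},[x_a,x_b]]\equiv[[x_a,x_b],x_c]^{-e_c}$ and $[x_a^{e_a},[x_b,x_c]]\equiv[[x_b,x_c],x_a]^{-e_a}$ modulo $\gamma_4(F)$, converted to basic commutators via Jacobi. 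The hardest part will be the combinatorial bookkeeping in this last step: several families of divisibility conditions, one per generator type of $K$, must be reconciled simultaneously through Jacobi—an intricate variant of the case analysis already carried out in Theorem \ref{typical}$(i)$, complicated by the fact that both factors of $\mathfrak s^2$ now contribute constraints.
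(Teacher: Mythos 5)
Your part (i) is complete and is essentially the paper's own argument: both reduce $w$ modulo $\gamma_3(F)$ to $\prod_{j>i}[x_j,\,x_i]^{a_{ij}}$ and read off $e_ie_j\mid a_{ij}$ from the fact that the image of $\mathfrak s^2$ in $\mathfrak f^2/\mathfrak f^3$ is spanned by the elements $e_ae_b(x_a-1)(x_b-1)$; the paper extracts the coefficients by right partial derivatives, you by comparison in the basis of $\mathfrak f^2/\mathfrak f^3$, which amounts to the same thing. Your reduction in (ii) is also the paper's: factor $w=uv$ using part (i) and reduce to showing that $v\in\gamma_3(F)$ with $v-1\in\mathfrak s^2+\mathfrak f^4$ lies in $\gamma_2(S)\gamma_4(F)$. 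Your generating set for $K$ is in fact correct, but even that step needs an argument you omit: a sum $\sum_t\alpha_t\beta_t\in\mathfrak f^3$ with $\alpha_t,\,\beta_t\in\mathfrak s$ must first be rewritten so that in each product one factor lies in $\mathfrak s\cap\mathfrak f^2$ (this uses that the degree-one parts $e_a(x_a-1)$ of $\mathfrak s$ are linearly independent in $\mathfrak f/\mathfrak f^2$, so the quadratic terms can only cancel identically); merely enumerating products of the chosen generators $A_i,\,B_{jk}$ shows these elements lie in $K$, not that they span it, and for the inclusion $D(4,\,\mathfrak s^2\mathbb Z[F])\leq\gamma_2(S)\gamma_4(F)$ it is the upper bound on $K$ that matters.

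The genuine gap, however, is that the proof of (ii) stops exactly where it has to begin. The assertion that membership of $\sum c_{ijk}\Delta_{i,j,k}$ in $K+\mathfrak f^4$ forces $e_k\mid c_{ijk}$ is only announced (``should match precisely'', ``the hardest part will be''), and that single divisibility statement is the entire content of part (ii); everything before it is routine. It is not an automatic rerun of Theorem \ref{typical}: the single-$e$ generators $e_a(\omega_{akj}-\omega_{ajk})$ and $e_a(\omega_{kja}-\omega_{jka})$ arising from $\gamma_2(F)\leq S$ are themselves of Jacobi type, so they interfere with the relations among the $\Delta_{i,j,k}$, and one must verify that no exponent smaller than $e_k$ survives (for instance, for $m=2$ and $e_1=e_2=e$ one checks by hand that $\Delta_{1,2,1}\notin K$ but $e\Delta_{1,2,1}\in K$; nothing in your write-up certifies the analogous fact in general). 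The paper carries this out by two successive right partial derivatives, first with respect to $x_i$ and then with respect to $x_j$, obtaining $e_k\mid a_{ijk}$ for $k\neq i$, and then, after splitting off those factors into $\gamma_2(S)\gamma_4(F)$, a second round of derivatives giving $e_i\mid a_{iji}$. Until you perform the corresponding computation in your basis, part (ii) is a plan rather than a proof.
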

\proof Clearly $\gamma_2(S)\leq 1+\mathfrak s^2$.\par\vspace{.25cm}\noindent
(i) Let $w\in D(3,\,\mathfrak s^2)$. Then, modulo $\gamma_3(F)$,  $w=\prod_{j>i}[x_j,\,x_i]^{a_{ij}}$, $a_{ij}\in \mathbb Z$, with $$\sum_{j>i}[(x_j-1)(x_i-1)-(x_i-1)(x_j-1)]a_{ij}\in \mathfrak f^3+\mathfrak s^2.$$Comparing the right partial derivative with respect to $x_i$, we have $$\sum_j(x_j-1)a_{ij}\in \mathfrak f^2+e_i\mathfrak s.$$Comparing the right partial derivative with respect to $x_j$, we conclude that $$e_ie_j\,|\, a_{ij}.$$ Hence it follows that $$\prod_{j>i}[x_j,\,x_i]^{a_{ij}}=\prod_{j>i}[x_j^{e_j},\,x_i^{e_i}]^{b_{ij}},\ b_{ij}\in \mathbb Z,$$ and consequently $w\in \gamma_3(F)\gamma_2(S)$.
\par\vspace{.5cm}\noindent
(ii) Let $w\in D(4,\,\mathfrak s^2\mathbb Z[F])$. By part (i), we have $$w=w_0\mod \gamma_2(S)$$ for some $w_0\in \gamma_3(F). $ We claim that $w_0\in \gamma_4(F)\gamma_2(S). $ Now, modulo $\gamma_4(F)$, $$w_0=\prod_iw_i,\ w_i=\prod_{j>i\leq k}[[x_j,\,x_i],\,x_k]^{a_{ijk}},\ a_{ijk}\in \mathbb Z,$$ and, as can be seen by descending induction on $i$, we have,  modulo $\mathfrak f^4+\mathfrak s^2\mathbb Z[F],$ \begin{multline}w_i-1=\sum_{j>i\leq k}a_{ijk}\{(x_k-1)[(x_j-1)(x_i-1)-(x_i-1)(x_j-1)]\\-[(x_j-1)(x_i-1)-(x_i-1)(x_j-1)](x_k-1)\}.\end{multline}
\par\vspace{.25cm}\noindent Comparing the right partial derivative with respect to $x_i$, we have
\begin{multline}
\sum_{j>i}a_{iji}[(x_j-1)(x_i-1)-(x_i-1)(x_j-1)]\\+\sum_{j,\,k}(x_j-1(x_k-1)a_{ijk}\in \mathfrak f^3+t(x_i,\,e_i)\mathfrak s\mathbb Z[F]+\mathfrak f\mathfrak s.\end{multline}Next, comparing right partial derivative with respect to $x_j$, we have $$a_{iji}(x_i-1)+\sum_k(x_k-1)a_{ijk}\in \mathfrak f^2+e_it(x_j,\,e_j)\mathbb Z[F]+e_i\mathfrak f+\mathfrak s\mathbb Z[F].$$Thus it follows that \begin{equation}\label{40}
e_k\,|\, a_{ijk}, \ k\neq i,\end{equation} \par\vspace{.25cm}\noindent
The condition (\ref{40}) implies that
$$w_i=w_{i1}w_{i2},\ w_{i1}=\prod_{j>i}[[x_j,\,x_i],\,x_i]^{a_{iji}},\ w_{i2}=\prod_{j>i<k}[[x_j,\,x_i],\,x_k]^{a_{ijl}}$$ with
$w_{i2}\in \gamma_2(S)\gamma_4(F).$ Consequently, we have
$$w_{i1}-1\in \mathfrak f^4+\mathfrak s^2\mathbb Z[F], $$ i.e.,
\begin{multline}
\sum_{j>i}a_{iji}\{(x_i-1)[(x_j-1)(x_i-1)-(x_i-1)(x_j-1)]\\-[(x_j-1)(x_i-1)-(x_i-1)(x_j-1)](x_i-1)\}\in \mathfrak f^4+\mathfrak s^2\mathbb Z[F].\end{multline}
Comparing right partial derivative with respect to $x_j$, we have
$$a_{iji}(x_i-1)^2\in \mathfrak f^3+t(x_j,\,e_j)\mathfrak s+\mathfrak f\mathfrak s.$$ Next comparing right partial derivative with respect to $x_i$, we have $$(x_i-1)a_{iji}\in \mathfrak f^2+e_jt(x_i,\,e_j)\mathbb Z[F]+e_i\mathfrak f+\mathfrak s\mathbb Z[F].$$Thus it follows that $e_i\,|\, a_{iji}.$ Hence $w_{i1}\in \gamma_2(S)\gamma_4(F)$. $\Box$\par\vspace{.5cm}On observing that $\gamma_2(S)\leq (\gamma_2(R)\gamma_3(F))\cap (R\gamma_4(F))$, we have the following result of which (ii) answers Narain Gupta's Problem 6.9(a) in \cite{Gupta:87}.\par\vspace{.25cm}
\begin{corollary}\label{Guptaprob}$(i)$ $D(3,\,\mathfrak r^2)=\gamma_2(R)\gamma_3(F)$\quad and \quad $(ii)$
$D(4,\ \mathfrak r^2\mathbb Z[F])\leq R\gamma_4(F).$
\end{corollary}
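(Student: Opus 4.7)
Both statements will follow from the immediately preceding theorem on $D(3,\mathfrak s^2)$ and $D(4,\mathfrak s^2\mathbb Z[F])$ by means of two elementary ingredients.  The first is the trivial inclusion $\mathfrak r\subseteq\mathfrak s$, giving $\mathfrak r^k\subseteq\mathfrak s^k$ and hence $D(n,\mathfrak r^k)\leq D(n,\mathfrak s^k)$ for all $n$ and $k$.  The second is the observation, noted in the sentence preceding the corollary, that
$\gamma_2(S)\leq(\gamma_2(R)\gamma_3(F))\cap(R\gamma_4(F))$; this is the only non-tautological step and I would verify it first.

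To verify the observation, write $S=R\gamma_2(F)$ and note that $\gamma_2(S)=[S,S]$ is generated by $[R,R]$, $[R,\gamma_2(F)]$ and $[\gamma_2(F),\gamma_2(F)]$.  Clearly $[R,R]=\gamma_2(R)$ and $[\gamma_2(F),\gamma_2(F)]\leq\gamma_4(F)$.  For $[R,\gamma_2(F)]$: elementwise, $[r,c]$ is a weight-three commutator in $F$, hence lies in $\gamma_3(F)$, while at the same time $[r,c]=r^{-1}(c^{-1}rc)\in R$ because $R$ is normal in $F$.  Thus $[R,\gamma_2(F)]\leq R\cap\gamma_3(F)$, and both halves of the observation follow.

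For part (i), the inclusion $\gamma_2(R)\gamma_3(F)\leq D(3,\mathfrak r^2)$ is immediate from $\gamma_2(R)-1\subseteq\mathfrak r^2$ and $\gamma_3(F)-1\subseteq\mathfrak f^3$.  For the reverse, take $w\in D(3,\mathfrak r^2)$; then $w\in D(3,\mathfrak s^2)$, so part (i) of the preceding theorem places $w$ in $\gamma_2(S)\gamma_3(F)$, and the first half of the observation then lands $w$ inside $\gamma_2(R)\gamma_3(F)$.

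For part (ii) the same chain runs one step further:
$$D(4,\mathfrak r^2\mathbb Z[F])\leq D(4,\mathfrak s^2\mathbb Z[F])\leq\gamma_2(S)\gamma_4(F)\leq R\gamma_4(F),$$
by the preceding theorem and the second half of the observation.  No real obstacle arises; the corollary simply packages the preceding theorem through the single bridge $\gamma_2(S)\leq(\gamma_2(R)\gamma_3(F))\cap(R\gamma_4(F))$, which in turn rests only on the normality of $R$ in $F$ and on weight considerations in the lower central series.
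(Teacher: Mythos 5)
Your proposal is correct and takes exactly the paper's route: the authors also deduce the corollary from the preceding theorem on $D(3,\,\mathfrak s^2)$ and $D(4,\,\mathfrak s^2\mathbb Z[F])$ via the single observation $\gamma_2(S)\leq (\gamma_2(R)\gamma_3(F))\cap (R\gamma_4(F))$, which is all they state by way of proof. You merely supply the routine verifications (the inclusion $\mathfrak r^2\subseteq\mathfrak s^2$ and the commutator decomposition of $\gamma_2(R\gamma_2(F))$) that the paper leaves implicit, and these are all sound.
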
\par\vspace{.5cm}
 If $\mathcal A$ is a nilpotent ring of exponent $n+1$, i.e., $\mathcal A^{n+1}=(0)$, then under the circle operation $$a\circ b=a+b+ab,\ a,\,b\in \mathcal A,$$we have a group $(\mathcal A,\,\circ)$, called the {\it circle group} of $\mathcal A$.
Suppose we have a homomorphism $\theta:G\to (\mathcal A,\,\circ)$. Then we can extend $\theta$, by linearity, to a ring homomorphism $$\theta^*:\mathbb Z[G]\to R.$$It is then clear that $\theta^*(\mathfrak g^i)\leq \mathcal A^i$, and, in particular, $$\theta^*(\mathfrak g^{n+1})=(0). $$Thus we have the following: \par\vspace{.25cm}
\begin{proposition} $(Sandling$ \cite{Sandling:1974}$)$ If a group $G$ embeds in the circle group of a nilpotent ring of exponent ${n+1}$, then the $(n+1)$th dimension subgroup $D_{n+1}(G)=\{1\}$.
\end{proposition}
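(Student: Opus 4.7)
\proof The plan is to simply assemble the pieces already laid out in the paragraph preceding the statement, together with the observation that an embedding of $G$ into the circle group gives an injective map which, combined with the vanishing statement $\theta^*(\mathfrak g^{n+1}) = (0)$, forces $D_{n+1}(G)$ to be trivial.

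First I would fix an embedding $\theta : G \hookrightarrow (\mathcal A, \circ)$ and verify that the assignment $g \mapsto 1 + \theta(g)$ (viewed inside the unital ring $\mathcal A^+ = \mathbb Z \cdot 1 \oplus \mathcal A$ obtained by formally adjoining an identity to $\mathcal A$) is multiplicative: indeed $(1+\theta(g))(1+\theta(h)) = 1 + \theta(g) + \theta(h) + \theta(g)\theta(h) = 1 + \theta(g) \circ \theta(h) = 1 + \theta(gh)$, which is exactly the condition that $\theta$ is a homomorphism into the circle group. By the universal property of the group ring this extends uniquely to a ring homomorphism $\theta^* : \mathbb Z[G] \to \mathcal A^+$, and by construction $\theta^*(g - 1) = \theta(g) \in \mathcal A$ for every $g \in G$.

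Next I would record the routine induction $\theta^*(\mathfrak g^i) \subseteq \mathcal A^i$ for all $i \geq 1$: the case $i = 1$ follows from the fact that $\mathfrak g$ is generated as an abelian group by the elements $g - 1$, and the inductive step is immediate from the multiplicativity of $\theta^*$. In particular, the nilpotency hypothesis $\mathcal A^{n+1} = (0)$ gives $\theta^*(\mathfrak g^{n+1}) = (0)$.

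Finally I would conclude as follows: let $g \in D_{n+1}(G) = G \cap (1 + \mathfrak g^{n+1})$. Then $g - 1 \in \mathfrak g^{n+1}$, so $\theta(g) = \theta^*(g - 1) = 0$. Since $0$ is the identity element of the circle group $(\mathcal A, \circ)$, and since $\theta$ is injective (as $G$ embeds in $(\mathcal A, \circ)$), this forces $g = 1$. Hence $D_{n+1}(G) = \{1\}$. There is no genuine obstacle here, as the only non-formal ingredient is the verification that $g \mapsto 1 + \theta(g)$ is multiplicative, which is precisely the reason for introducing the circle operation in the first place. $\Box$
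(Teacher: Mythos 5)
Your proposal is correct and follows essentially the same route as the paper: extend the embedding $\theta$ to a ring homomorphism $\theta^*$ on $\mathbb Z[G]$, observe $\theta^*(\mathfrak g^i)\subseteq \mathcal A^i$ so that $\theta^*(\mathfrak g^{n+1})=(0)$, and use injectivity of $\theta$ to conclude. You merely make explicit the details (adjoining a unit, checking multiplicativity of $g\mapsto 1+\theta(g)$, the final injectivity step) that the paper leaves as "clear."
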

\par
As a consequence of the above Proposition and the existence of nilpotent groups of class three without dimension property, we note that, {\it in general, a nilpotent group of class three cannot be embedded in the circle group of a nilpotent ring of exponent four} (see \cite{Ault-Watters}).
\Para
In the foregoing discussion, we have identified the subgroups $$D(4,\,\mathfrak a)=F\cap (1+\mathfrak a+\mathfrak f^4)$$ for various two-sided  ideals $\mathfrak a\subset \mathfrak f$. In each of these cases $F/D(4,\,\mathfrak a)$ embeds in the circle group of the nilpotent ring $\mathfrak f/(\mathfrak a+\mathfrak f^4)$ of exponent four under the map $$fD(4,\,\mathfrak a)\mapsto f-1+\mathfrak a+\mathfrak f^4.$$ Thus in each of these cases we are getting  a nilpotent group of class three which has the {\it dimension property}, i.e., a group whose dimension series agrees with its lower central series.

\par\vspace{.5cm}
\section{Derived  functors}Let us recall the construction of derived
functors in the sense of Dold-Puppe \cite{DoldPuppe}. For an endofunctor $F$ on the category $\mathfrak A$ of
abelian groups, the bigraded sequence $L_i(-,\,n),\  i\geq 0,\ n\geq 0$,   of derived functors of $F$ is defined by setting
$$
L_iF(A,\,n)=\pi_i(FKP_\ast[n]),\  A\in \mathfrak A,
$$
where $P_*[n]_\ast \to A$ is a projective resolution of $A$ starting at level $n$, and
 $K$ is  the Dold-Kan transform,  inverse to the Moore normalization
 functor from simplicial abelian groups to chain complexes. We are  concerned only with the case when $n=0$, and, as such, we drop the index $n$ and write $L_iF(A):=L_i(A,\,0)$. Our aim in this section is to explore relationship between generalized dimension subgroups and derived functors.  \par\vspace{.25cm}
Recall that, if $\operatorname{SP}^2(A)$ denotes the second symmetric power of the abelian group $A$, then  $A\mapsto \operatorname{SP}^2(A)$ is a non-additive functor of degree 2 on the category $\mathfrak A$. As observed in  \cite{HMP}, the functor $L_1\operatorname{SP}^2$ is related to the generalized dimension subgroup  $D(3,\,-)$.
Let us give another  proof of that relationship.
\par\vspace{.25cm}
\begin{theorem}\label{D(3)} Let $G$ be a group, $G_{ab}$ its abelianization $G/\gamma_2(G)$,  and $F/R$  a free presentation of  $G$. Then
\begin{equation}\label{s2formula}
L_1\operatorname{SP}^2(G_{ab})\cong D(3,\,\mathfrak r\mathfrak f)/\gamma_2(R)\gamma_3(F).
\end{equation}\end{theorem}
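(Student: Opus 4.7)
The plan is to match both sides of the isomorphism with a common explicit subquotient of $\Lambda^2(L)$, where $L:=F/\gamma_2(F)$ is the free abelian group on the images of the generators of $F$ and $K:=R\gamma_2(F)/\gamma_2(F)\subseteq L$. Then $0\to K\to L\to G_{ab}\to 0$ is a free abelian presentation of $G_{ab}$ naturally attached to $G\cong F/R$.

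On the group-ring side, I would use the classical identification $\mathfrak{f}^2/\mathfrak{f}^3\cong L\otimes L$ via $(x-1)(y-1)\mapsto\bar{x}\otimes\bar{y}$. Under this, the commutator embedding $\gamma_2(F)/\gamma_3(F)\hookrightarrow\mathfrak{f}^2/\mathfrak{f}^3$ identifies $\gamma_2(F)/\gamma_3(F)$ with $\Lambda^2(L)$ inside $L\otimes L$ via antisymmetrization $a\wedge b\mapsto a\otimes b-b\otimes a$. Moreover, modulo $\mathfrak{f}^3$ the ideal $\mathfrak{r}\mathfrak{f}$ corresponds to $K\otimes L\subseteq L\otimes L$, because its generators $(r-1)(f-1)$ with $r\in R$, $f\in F$ project to $\bar r\otimes\bar f\in K\otimes L$. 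Hence $D(3,\mathfrak{r}\mathfrak{f})/\gamma_3(F)$ is identified with the preimage in $\Lambda^2(L)$ of $K\otimes L$ under antisymmetrization. Since the image of $\gamma_2(R)$ in $\Lambda^2(L)$ is $\Lambda^2(K)$, we obtain
$$\frac{D(3,\mathfrak{r}\mathfrak{f})}{\gamma_2(R)\gamma_3(F)}\;\cong\;\frac{\{u\in\Lambda^2(L):\operatorname{antisym}(u)\in K\otimes L\}}{\Lambda^2(K)}.$$

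On the derived-functor side, I would compute $L_1\operatorname{SP}^2(G_{ab})$ via Dold-Puppe, starting from a free simplicial resolution of $G_{ab}$ built from the presentation $0\to K\to L\to G_{ab}\to 0$. Applying $\operatorname{SP}^2$ levelwise, using the cross-effect decomposition $\operatorname{SP}^2(A\oplus B)\cong\operatorname{SP}^2(A)\oplus(A\otimes B)\oplus\operatorname{SP}^2(B)$, and carrying out the Moore normalization, one obtains
$$L_1\operatorname{SP}^2(G_{ab})\;\cong\;\frac{\ker\bigl(K\otimes L\xrightarrow{\mu}\operatorname{SP}^2(L)\bigr)}{\operatorname{im}\bigl(\Lambda^2(K)\to K\otimes L\bigr)},$$
where $\mu(k\otimes\ell)=ik\cdot\ell$ and the denominator map is $k_1\wedge k_2\mapsto k_1\otimes ik_2-k_2\otimes ik_1$. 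Since the kernel of $L\otimes L\twoheadrightarrow\operatorname{SP}^2(L)$ equals the image of antisymmetrization $\Lambda^2(L)\hookrightarrow L\otimes L$, one has $\ker(\mu)=(K\otimes L)\cap\Lambda^2(L)$, and the denominator likewise identifies with $\Lambda^2(K)\subseteq\Lambda^2(L)$. Hence both sides agree with the same subquotient of $\Lambda^2(L)$.

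The main obstacle will be establishing the formula for $L_1\operatorname{SP}^2(G_{ab})$ above: one must carefully control the Moore normalization of $\operatorname{SP}^2$ applied to the simplicial resolution, tracking contributions from $\operatorname{SP}^2(K)$ and from higher Moore degrees $N_n$ ($n\geq 2$), and show that together they collapse the denominator to precisely the image of $\Lambda^2(K)$. Once this formula is in hand, the matching with the group-ring side is formal, and naturality of the isomorphism in $G$ (hence independence of the choice of free presentation) follows from the naturality of all constructions. A small technical point at the outset is the identification $(\mathfrak{r}\mathfrak{f}+\mathfrak{f}^3)/\mathfrak{f}^3\cong K\otimes L$ inside $\mathfrak{f}^2/\mathfrak{f}^3=L\otimes L$, which is immediate from the above description of the generators of $\mathfrak{r}\mathfrak{f}$.
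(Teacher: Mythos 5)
Your proposal is correct and follows essentially the same route as the paper: both sides are identified with $H_1$ of the Koszul complex $\Lambda^2(\bar R)\to \bar R\otimes \bar F\to \operatorname{SP}^2(\bar F)$, with the group-ring side matched via the standard identifications of $\gamma_2(F)/\gamma_2(R)\gamma_3(F)$ with $\bar F\wedge\bar F/\bar R\wedge\bar R$ and of $\mathfrak f^2/(\mathfrak r\mathfrak f+\mathfrak f^3)$ with $\bar F\otimes\bar F/\bar R\otimes\bar F$. The only difference is that the step you flag as the main obstacle --- that the Koszul complex computes $L\operatorname{SP}^2(G_{ab})$ --- is simply cited in the paper (K\"ock; Breen--Mikhailov) rather than re-derived from the Dold--Puppe definition.
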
\par\vspace{.25cm}\proof

Set $\bar F=F_{ab},\ \bar R=R\gamma_2(F)/ \gamma_2(F)$. Then $G_{ab}\cong\bar
F/\bar R$. Let $\{x_i\}_{i\in I}$ be a free basis of $F$. The standard identifications
$$\frac{\bar F\otimes \bar F}{\bar R\otimes \bar F}=
\frac{\mathfrak f^2}{\mathfrak r\mathfrak f+\mathfrak
f^3},\quad \frac{\bar F\wedge \bar
F}{\bar R\wedge \bar R}=
\frac{\gamma_2(F)}{\gamma_2(R)\gamma_3(F)}
$$induced by $\bar x_i\otimes \bar x_j\mapsto (x_i-1)(x_j-1)+{\mathfrak r\mathfrak f+\mathfrak
f^3}$, and $\bar x_i\wedge \bar x_j\mapsto [x_i,\,x_j]\gamma_2(R)\gamma_3(F)$ respectively,
imply that there is a natural exact sequence
\begin{equation}\label{3term}
0\to \frac{D(3, \, \mathfrak r\mathfrak f)}{\gamma_2(R)\gamma_3(F)}\to \frac{\bar F \wedge \bar F}{\bar
R\wedge \bar R}\to\frac{\bar F\otimes \bar F}{\bar R\otimes \bar
F}
\end{equation}
Now recall that the Koszul complex
\begin{equation}\label{koszul1} 0\to \bar R\wedge \bar R\to \bar
R\otimes \bar F\to \operatorname{SP}^2(\bar F)
\end{equation}
defines the object $L\operatorname{SP}^2(G_{ab})$ of the derived category of
abelian groups; therefore, $H_1$ of the complex (\ref{koszul1}) is
naturally isomorphic to $L_1\operatorname{SP}^2(G_{ab})$ (\cite{BreenMikhailov}, \cite{Kock:2001}). The claimed
identification (\ref{s2formula}) follows from the following
commutative diagram with exact columns
$$
\xyma{\bar R\wedge \bar R\ar@{>->}[d]\ar@{>->}[r] & \bar R\otimes
\bar F\ar@{->}[r]\ar@{>->}[d] & \operatorname{SP}^2(\bar F)\ar@{=}[d]\\ \bar
F\wedge \bar F\ar@{>->}[r] \ar@{->>}[d] & \bar F\otimes \bar
F\ar@{->>}[r]\ar@{->>}[d] &\operatorname{SP}^2(\bar F)\\ \frac{\bar F\wedge \bar
F}{\bar R\wedge \bar R}\ar@{->}[r] & \frac{\bar F\otimes \bar
F}{\bar R\otimes \bar F}}
$$
where the middle sequence is the standard Koszul exact sequence
and the lower horizontal map is the map from (\ref{3term}). $\Box$




\par\vspace{.5cm}

Let us next recall the well-known description (see, for example,
MacLane \cite{Mac1}), of the derived functor
$L_1\otimes^2=\operatorname{Tor}^{[2]}:\mathfrak A\to \mathfrak
A$. Given $A\in \mathfrak A$, the group $\operatorname{Tor}^2(A)$
is generated by the $n$-linear expressions $\tau_h(a_1,a_2)$
(where all $a_i$ belong to the subgroup $ {}_hA:=\{a\in A\,|\,
ha=0\},  \ h
>0)$, subject to the so-called slide relations
\begin{equation}
\label{slide} \tau_{hk}(a_1,\,a_2) = \tau_{h}(ka_1, \,a_2)
\end{equation} for all $i$ whenever  $hka_1 = 0$ and $ha_2=0$ and
analogous relation, where the roles of $a_1,\ a_2$ are changed.

The natural map $\otimes^2 \to \operatorname{SP}^2$  induces  a
natural epimorphism
\begin{equation} \label{epi} L_1\otimes^2(A)\to L_1\operatorname{SP}^2(A) \end{equation} which maps
the generator $\tau_h(a_1,\,a_2)$ of $ L_1\otimes^2(A)\cong
\operatorname{Tor}(A,\,A)$ to the generator $\beta_h(a_1,\,a_2)$ of
$L_1\operatorname{SP}^2(A)$ so that the kernel of this map is
generated by the elements $\tau_h(a,\,a),\ a\in\ _hA.$

It is shown by Jean  in  \cite{Jean} that
\begin{equation}\label{syder}
L_1\operatorname{SP}^{3}(A)\simeq
(L_1\operatorname{SP}^{2}(A)\otimes A)/Jac_{S},
\end{equation}
where $Jac_{S}$ is the subgroup generated by elements of the form
$$
\beta_h(x_1,\,x_2)\otimes x_3-\beta_h(x_1,\,x_3)\otimes
x_2+\beta_h(x_2,\,x_3)\otimes x_1
$$
with $x_i \in {}_hA$.
\par\vspace{.25cm}
There is a natural connection between  these functors and the  homology of
Eilenberg-MacLane spaces $K(\Pi,\,n)$ (see, for example, \cite{Breen},
\cite{BreenMikhailov}). In particular, there are natural isomorphisms
$$
H_5K(A,\,2)\cong L_1\Gamma^2(A),\ \ H_7K(A,\,2)\cong L_1\Gamma^3(A)
$$
where $\Gamma^2,\  \Gamma^3$ are divided square and cube functor respectively. The
natural transformations $\operatorname{SP}^2\to \Gamma^2$, $\operatorname{SP}^3\to \Gamma^3$
induce, for $A\in \mathfrak A$,  the following exact sequences (\cite{Jean},
\cite{BreenMikhailov}) (which do not split functorially):
$$
0\to L_1\operatorname{SP}^2(A)\to H_5K(A,\,2)\to \operatorname{Tor}(A,\,\mathbb Z/2\mathbb Z)\to 0
$$
and
\begin{equation}\label{h7}
0\to L_1\operatorname{SP}^3(A)\to H_7K(A,\,2)\to \operatorname{Tor}_1(A,\,A,\,\mathbb Z/2\mathbb Z)\oplus
\operatorname{Tor}(A,\,\mathbb Z/3\mathbb Z)\to 0
\end{equation}\par\vspace{.25cm}We are now ready to present our main result on the relation between generalized dimension subgroups and derived functors.
\par\vspace{.25cm}
\begin{theorem}\label{main1}
If $G_{ab}$ is 2-torsion-free, then $$ \frac{D(4,\,\mathfrak
f\mathfrak r\mathfrak f)}{[R,\,R,\,F]\gamma_4(F)}\simeq
L_1\operatorname{SP}^3(G_{ab})
$$
and there is a natural short exact sequence
\begin{equation}\label{frfsq}
\frac{D(4,\,\mathfrak f\mathfrak r\mathfrak f)}{[R,\,R,\,F]\gamma_4(F)}\hookrightarrow
H_7K(G_{ab},\,2)\twoheadrightarrow \operatorname{Tor}(G_{ab},\,\mathbb Z/3)
\end{equation}
In particular, if $G_{ab}$ is both 2-torsion-free and 3-torsion-free, then there is
a natural isomorphism
\begin{equation}\label{frfis}
\frac{D(4,\,\mathfrak f\mathfrak r\mathfrak f)}{[R,\,R,\,F]\gamma_4(F)}\cong H_7K(G_{ab},\,2).
\end{equation}
\end{theorem}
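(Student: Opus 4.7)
The plan is to build an explicit natural isomorphism using Jean's formula~(\ref{syder}) for $L_1\operatorname{SP}^3$. I would first reduce to the case of $F$ of finite rank, since both sides commute with directed colimits over finitely generated subpresentations. Choose an ordered basis $\{x_1<\cdots<x_m\}$ of $F$ and lifts so that the elementary divisors $e_m\mid e_{m-1}\mid\cdots\mid e_1$ of $R\gamma_2(F)/\gamma_2(F)$ appear diagonally; the 2-torsion-freeness of $G_{ab}$ forces each $e_i$ to be odd (or zero), so the condition $e_i\mid 2e$ in Theorem~\ref{frfth} collapses to $e_i\mid e$. Corollary~\ref{der1} then identifies
\begin{equation*}
\frac{D(4,\mathfrak f\mathfrak r\mathfrak f)}{[R,R,F]\gamma_4(F)}\;\cong\;\frac{\langle X\rangle\gamma_4(F)}{\langle Y\rangle\gamma_4(F)},
\end{equation*}
where modulo $\gamma_4(F)$, $\langle X\rangle$ is generated by the $[[x_j,x_i],x_k]^{e_i}$ and $\langle Y\rangle$ by the $[[x_j,x_i],x_k]^{e_ie_j}$, ranging over $j>i$ and $1\le k\le m$. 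Without the 2-torsion hypothesis, the spurious generators $[[x_j,x_i],x_j]^e$ with $e_i\mid 2e$ but $e_i\nmid e$ would survive, obstructing any clean match with $L_1\operatorname{SP}^3$.

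Theorem~\ref{D(3)}, together with the divisibility $e_j\mid e_i$ for $j>i$, lifts the generator $\beta_{e_i}(\bar x_i,\bar x_j)\in L_1\operatorname{SP}^2(G_{ab})$ to $[x_i^{e_i},x_j]\in D(3,\mathfrak r\mathfrak f)\cap D(3,\mathfrak f\mathfrak r)$. I would then define
\begin{equation*}
\Phi\colon L_1\operatorname{SP}^2(G_{ab})\otimes G_{ab}\longrightarrow\frac{D(4,\mathfrak f\mathfrak r\mathfrak f)}{[R,R,F]\gamma_4(F)},\qquad [u]\otimes\bar f\longmapsto [u,f]\cdot[R,R,F]\gamma_4(F).
\end{equation*}
Well-definedness uses the Fox-calculus identity $[u,f]-1\equiv(u-1)(f-1)-(f-1)(u-1)\pmod{\mathfrak f^4}$ applied to a lift $u\in D(3,\mathfrak r\mathfrak f)\cap D(3,\mathfrak f\mathfrak r)$: both $(u-1)(f-1)$ and $(f-1)(u-1)$ then lie in $\mathfrak f\mathfrak r\mathfrak f+\mathfrak f^4$, and the remaining commutator identities (bilinearity, vanishing of $\Phi$ on $\gamma_2(R)\gamma_3(F)\otimes G_{ab}$ and on $L_1\operatorname{SP}^2(G_{ab})\otimes R$) are routine. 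That $\Phi$ factors through $Jac_S$ is a direct Hall--Witt computation in the abelian quotient $\gamma_3(F)/\gamma_4(F)$: the alternating sum of three triple commutators reduces to the classical Hall--Witt identity raised to the common torsion exponent.

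The main obstacle is injectivity of the induced map $\bar\Phi\colon L_1\operatorname{SP}^3(G_{ab})\to D(4,\mathfrak f\mathfrak r\mathfrak f)/[R,R,F]\gamma_4(F)$. Surjectivity is immediate, since each generator $[[x_j,x_i],x_k]^{e_i}$ is hit by $\beta_{e_i}(\bar x_j,\bar x_i)\otimes\bar x_k$ (up to sign). For injectivity I would construct the candidate inverse $\Psi\colon[[x_j,x_i],x_k]^{e_i}\mapsto\beta_{e_i}(\bar x_j,\bar x_i)\otimes\bar x_k\pmod{Jac_S}$ and check compatibility with the $\langle Y\rangle$-relations and with the antisymmetry/Hall--Witt relations in $\gamma_3(F)/\gamma_4(F)$. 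The $\langle Y\rangle$-relation $[[x_j,x_i],x_k]^{e_ie_j}$ translates via bilinearity of $\beta$ to $\beta_{e_i}(e_j\bar x_j,\bar x_i)\otimes\bar x_k$, which vanishes because $e_j\bar x_j=0$ in $G_{ab}$, while the slide relations defining $L_1\operatorname{SP}^2$ and the Jacobi relations of $Jac_S$ correspond directly to commutator identities. The 2-torsion-freeness is essential at this step: it kills the diagonal contributions that would otherwise leave an order-two obstruction to matching the two presentations.

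Finally, the short exact sequence~(\ref{frfsq}) follows by inserting the established isomorphism into~(\ref{h7}) and observing that $\operatorname{Tor}_1(G_{ab},G_{ab},\mathbb Z/2)=0$ for 2-torsion-free $G_{ab}$. The isomorphism~(\ref{frfis}) is the direct consequence of additionally assuming $G_{ab}$ is 3-torsion-free, which forces $\operatorname{Tor}(G_{ab},\mathbb Z/3)=0$.
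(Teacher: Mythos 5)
Your proposal is correct and follows essentially the same route as the paper: the map $\beta_h(\bar x_1,\,\bar x_2)\otimes\bar x_3\mapsto[x_1,\,x_2,\,x_3]^h$ factoring through Jean's presentation (\ref{syder}) of $L_1\operatorname{SP}^3$ via the Hall--Witt identity, the identification of the target with $D(4,\,\mathfrak f\mathfrak r\mathfrak f)/[R,\,R,\,F]\gamma_4(F)$ through Corollary \ref{der1} together with the collapse of $e_i\mid 2e$ to $e_i\mid e$ under 2-torsion-freeness, and the sequence (\ref{h7}) for the final assertions. The only divergence is cosmetic: where you verify injectivity by writing down an inverse on generators and checking relations, the paper computes $L_1\operatorname{SP}^3(G_{ab})$ explicitly via cross-effects and matches the two presentations (free generators and their orders) directly, which amounts to the same bookkeeping.
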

\par\vspace{.25cm}
\begin{proof} Let us first consider the natural homomorphism
\begin{multline*}
q: L_1\operatorname{SP}^2(G_{ab})\otimes G_{ab}\to \\ Q(F,\,R):=\frac{\langle
[[x_j,\,x_i],\,x_k]^{e_i},\
m\geq j>i\geq 1,\ 1\leq k\leq m\rangle\gamma_4(F)}{[R,\,R,\,F]\gamma_4(F)}
\end{multline*}
defined by
$$
q: \beta_h(\bar x_1,\,\bar x_2)\otimes \bar x_3\mapsto
[x_1,\,x_2,\,x_3]^h.[R,\,R,\,F]\gamma_4(F),\ x_1^h,\ x_2^h\in R\gamma_2(F).
$$
Clearly, $q$ is an epimorphism. For $x_3\in F$ such that $x_3^h\in
R\gamma_2(F)$, observe that \begin{multline} q(\beta_h(\bar x_1, \bar
x_2)\otimes \bar x_3-\beta_h(\bar x_1, \bar x_3)\otimes \bar
x_2+\beta_h(\bar x_2, \bar x_3)\otimes \bar x_1)=\\
[x_1,x_2,x_3]^h[x_1,x_3,x_2]^{-h}[x_2,x_3,x_1]^h.[R,R,F]\gamma_4(F).
\end{multline}
By Hall-Witt identity, this element is trivial in $Q(F,R)$. By
(\ref{syder}), the map $q$  factors through $L_1\operatorname{SP}^3(G_{ab})$.
That is, we obtain an induced epimorphism
$$
q': L_1\operatorname{SP}^3(G_{ab})\to Q(F,R).
$$
The formula for cross-effect of $L_1\operatorname{SP}^3$ is given as follows (see
\cite{BreenMikhailov}, \cite{Jean}): for abelian groups $A,\ B$,
$$
L_1\operatorname{SP}^3(A\oplus B)=L_1\operatorname{SP}^3(A)\oplus L_1\operatorname{SP}^3(B)\oplus L_1\operatorname{SP}^3(A|B),
$$
with a short exact sequence, which splits (unnaturally)
\begin{multline}
0\to (L_1\operatorname{SP}^2(A)\otimes B)\oplus (A\otimes L_1\operatorname{SP}^2(B))\to
L_1\operatorname{SP}^3(A|B)\to\\ \operatorname{Tor}(\operatorname{SP}^2(A),B)\oplus \operatorname{Tor}(A,\,\operatorname{SP}^2(B))\to 0.
\end{multline}
Observe that using the formula for cross-effect of a polynomial functor
together with the value of the  functor on cyclic groups one can
compute the value of the functor on any finitely generated
abelian group. Since $L_1\operatorname{SP}^2(\mathbb Z/l\mathbb Z)=L_1\operatorname{SP}^3(\mathbb Z/l\mathbb Z)=0$
for any $l\geq 0$, we can easily compute $L_1\operatorname{SP}^3(G_{ab})$. It turns out that the
group $L_1\operatorname{SP}^3(G_{ab})$ is freely generated by triples $w(i,\,j,\,k),\
i<j<k$, and $t(i,\,j,\,k),\linebreak  i\leq j<k$, such that the exponent of
$w(i,j,k)$ and of $t(i,\,j,\,k)$ is exactly $e(k)$:
\begin{multline}
L_1\operatorname{SP}^3(G_{ab})\simeq \langle w(i,\,j,\,k), \ i<j<k,\ t(i,\,j,\,k),\ i\leq
j<k\ |\\ e(k)w(i,\,j,\,k)=e(k)t(i,\,j,\,k)=0\rangle
\end{multline}
The map $q'$ then is given as follows:
\begin{align*}
& w(i,j,k)\mapsto [x_j,x_i,x_k]^{e_i},\\
& t(i,j,k)\mapsto [x_k,x_i,x_j]^{e_i}.
\end{align*}
Observe that, as an abelian group, the group $Q(F,\,R)$  is freely
generated by brackets $[x_j,\,x_i,\,x_k]^{e_i},\ i<j$ with relations
$([x_j,\,x_i,\,x_k]^{e_i})^{lcm(e_j,\,e_k)}=1$. Hence $q'$ is an
isomorphism.

Observe that, if $G_{ab}$ is 2-torsion-free, Corollary \ref{der1}
implies that there is a natural isomorphism
$$
Q(F,\,R)\cong \frac{D(4,\,\mathfrak f\mathfrak r\mathfrak
f)}{[R,\,R,\,F]\gamma_4(F)}.
$$
The sequence (\ref{frfsq}) and the isomorphism (\ref{frfis}) now
follow from the isomorphism $Q(F,\,R)\cong L_1\operatorname{SP}^3(G_{ab})$ and
the sequence (\ref{h7}).
\end{proof}

We point out that, Theorem \ref{cubicpicture} (v) from the next
section implies that, for any free presentation $G\cong F/R$, there is a natural embedding
$$
L_1\operatorname{SP}^3(G_{ab})\hookrightarrow\frac{D(4,\,\mathfrak
f\mathfrak r\mathfrak f)}{[R,\,R,\,F]\gamma_4(F)}.
$$
The proof of \ref{cubicpicture} (v) is not combinatorial.

\vspace{.5cm}

For an abelian group $A$ define $\mathfrak L_s^3(A)$ to be the  abelian group
generated by brackets $\{a,\,b,\,c\},\ a,\,b,\,c\in A$ which are additive
on each variable, with the following defining relations:
\begin{align*}
& \{a,b,c\}=\{b,a,c\},\\
& \{a,b,c\}+\{c,a,b\}+\{b,c,a\}=0.
\end{align*}
This construction defines a functor $\mathfrak L_s^3: {\sf Ab}\to {\sf Ab}$,
which we  call {\it the third super Lie functor}. The
difference between $\mathfrak L_s^3$ and the third super-Lie
functor $\EuScript L_s^3$ considered in \cite{BreenMikhailov} is
that, in general, $\{a,a,a\}\neq 0$ in $\mathfrak L_s^3$ and one
can get the functor $\EuScript L_s^3$ as a natural quotient of
$\mathfrak L_s^3$ by brackets of the type $\{a,a,a\}$, and we have the following commutative diagram:
$$
\xyma{& & A\otimes \mathbb Z/3\mathbb Z\ar@{>->}[d] \\ \operatorname{SP}^3(A) \ar@{>->}[d]
\ar@{>->}[r] & \operatorname{SP}^2(A)\otimes
A\ar@{->>}[r] \ar@{>->}[d] & \mathfrak L_s^3(A) \ar@{->>}[d]\\
\Gamma^3(A)\ar@{>->}[r]\ar@{->>}[d] & \Gamma^2(A)\otimes
A\ar@{->>}[r]\ar@{->>}[d] & \EuScript L_s^3(A)\\
A\otimes \mathbb Z/3\mathbb Z\oplus (A\otimes A\otimes \mathbb
Z/2\mathbb Z)\ar@{->>}[r] & A\otimes A\otimes \mathbb Z/2\mathbb Z.}
$$
\par\vspace{.25cm}
\begin{theorem}\label{super} If $G$ is a group and $F/R $ its free presentation, then
$$\frac{D(4,\,\mathfrak r^2\mathfrak f)}{\gamma_3(R)\gamma_4(F)}\cong L_2\mathfrak L_s^3(G_{ab}).$$
\end{theorem}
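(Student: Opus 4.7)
My plan is to parallel the strategy of Theorem \ref{main1}. Set $\bar F=F_{ab}$ and $\bar R=R\gamma_2(F)/\gamma_2(F)$, so that $G_{ab}\cong\bar F/\bar R$. From the preceding theorem, $D(4,\mathfrak r^2\mathfrak f)$ is generated modulo $\gamma_4(F)$ by the commutators $[[x_j,x_i],x_k]^{\ell_{ijk}}$ with $j>i\leq k$, where $\ell_{ijk}=\operatorname{lcm}(e_ie_j,e_ie_k,e_je_k)$ for $k\neq i$ and $\ell_{iji}=e_ie_j$. Further quotienting by $\gamma_3(R)\gamma_4(F)$ produces a finitely presented abelian group; the first step is to rewrite it, in terms of the basis $\{\bar x_i\}$ of $\bar F$ and the corresponding generators of $\bar R$, as the homology of a short explicit complex of tensor/symmetric products attached to the presentation $\bar R\hookrightarrow\bar F\twoheadrightarrow G_{ab}$.

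Next, I would compute $L_2\mathfrak L_s^3(G_{ab})$ by means of the same complex. The short exact sequence $0\to\operatorname{SP}^3\to\operatorname{SP}^2\otimes\operatorname{id}\to\mathfrak L_s^3\to 0$ displayed in the commutative diagram preceding the theorem yields, upon deriving, a long exact sequence
$$\cdots\to L_2\operatorname{SP}^3(G_{ab})\to L_2(\operatorname{SP}^2\otimes\operatorname{id})(G_{ab})\to L_2\mathfrak L_s^3(G_{ab})\to L_1\operatorname{SP}^3(G_{ab})\to\cdots$$
whose terms can be computed via Theorem \ref{D(3)}, Jean's formula \eqref{syder}, and the cross-effect machinery for $\operatorname{SP}^2$ and $\mathfrak L_s^3$. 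A natural homomorphism
$$q\colon L_2\mathfrak L_s^3(G_{ab})\;\longrightarrow\;D(4,\mathfrak r^2\mathfrak f)/\gamma_3(R)\gamma_4(F)$$
then matches generators: a generator built from lifts $r_1,r_2\in R$, $f\in F$ is sent to the class of the basic commutator $[[r_1,r_2],f]$ raised to the appropriate power prescribed by the slide data. The symmetry $\{a,b,c\}=\{b,a,c\}$ is compatible with the relevant commutator identities in $[R,R]$ modulo $\gamma_3(R)\gamma_4(F)$, while the cyclic defining relation of $\mathfrak L_s^3$ is realised by the Hall--Witt identity modulo $\gamma_4(F)$, exactly as exploited in the proof of Theorem \ref{main1}.

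The main obstacle will be to verify that $q$ is an isomorphism (rather than merely a surjection), and, closely related, that the divisibility data $\ell_{ijk}$ from the preceding theorem matches exactly the slide/torsion data encoded in the defining relations of $\mathfrak L_s^3$. Following Theorem \ref{main1}, I would decompose $G_{ab}$ into cyclic summands and check the matching summand-by-summand using the cross-effect formula for $\mathfrak L_s^3$. Unlike Theorem \ref{main1}, no torsion hypothesis on $G_{ab}$ is required here, because the $2$- and $3$-torsion phenomena that forced such a hypothesis there are absorbed into the defining relations of $\mathfrak L_s^3$; the distinction between $\mathfrak L_s^3$ and the super-Lie functor $\EuScript L_s^3$ (namely, that $\{a,a,a\}\neq 0$ in $\mathfrak L_s^3$) is precisely what lets the identification succeed unconditionally.
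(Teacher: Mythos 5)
You have correctly identified that this theorem should admit a proof in the spirit of the rest of the paper, but your proposal is a plan rather than a proof, and the step you yourself label ``the main obstacle'' --- verifying that $q$ is an isomorphism and that the divisibility data $\ell_{ijk}$ matches the torsion of $L_2\mathfrak L_s^3(G_{ab})$ --- is precisely the content of the theorem; nothing in the proposal supplies it. Moreover, the generator-matching you sketch is already off target. An element $[[r_1,r_2],f]$ with $r_1,r_2\in R$ and $f\in F$ arbitrary does \emph{not} in general lie in $1+\mathfrak r^2\mathfrak f+\mathfrak f^4$: writing $c=[r_1,r_2]$, one has $[c,f]-1\equiv (c-1)(f-1)-(f-1)(c-1)\pmod{\mathfrak f^4}$, and while $(c-1)(f-1)\in\mathfrak r^2\mathfrak f$, the term $(f-1)(c-1)$ lies only in $\mathfrak f\mathfrak r^2$, whose image in $\mathfrak f^3/\mathfrak f^4\cong \bar F^{\otimes 3}$ is $\bar F\otimes\bar R\otimes\bar R$ rather than $\bar R\otimes\bar R\otimes\bar F$. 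This is consistent with the exponents $\ell_{ijk}=\operatorname{lcm}(e_ie_j,e_ie_k,e_je_k)=e_i\operatorname{lcm}(e_j,e_k)$ in the preceding theorem, which involve $e_k$: the third entry cannot be arbitrary. Finally, the computation of $L_2\mathfrak L_s^3$ on cyclic groups, its cross-effects, and the connecting maps in your long exact sequence for $0\to\operatorname{SP}^3\to\operatorname{SP}^2\otimes\operatorname{id}\to\mathfrak L_s^3\to 0$ are all asserted to be computable but never computed; each is a nontrivial piece of work.

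For contrast, the paper's proof is structural and parallels Theorem \ref{D(3)}, not Theorem \ref{main1}; it never touches the generator description of $D(4,\mathfrak r^2\mathfrak f)$ and never evaluates either side on cyclic groups. It filters the Koszul complex for $L\operatorname{SP}^3(G_{ab})$ against $\bar R\to\bar F$ to produce an explicit four-term complex
$$
\EuScript L^3(\bar R)\to \bar R\otimes\bar R\otimes\bar F\to \bar F\otimes\bar R\otimes\bar F\to \mathfrak L_s^3(\bar F)
$$
modelling $L\mathfrak L_s^3(G_{ab})$, reads off $L_2\mathfrak L_s^3(G_{ab})$ as the kernel of the induced map $\EuScript L^3(\bar F)/\EuScript L^3(\bar R)\to \bar F^{\otimes 3}/(\bar R\otimes\bar R\otimes\bar F)$, and then invokes the identifications $\gamma_3(F)/\gamma_3(R)\gamma_4(F)\cong \EuScript L^3(\bar F)/\EuScript L^3(\bar R)$ and $\mathfrak f^3/(\mathfrak r^2\mathfrak f+\mathfrak f^4)\cong \bar F^{\otimes 3}/(\bar R\otimes\bar R\otimes\bar F)$, under which that kernel is exactly $D(4,\mathfrak r^2\mathfrak f)/\gamma_3(R)\gamma_4(F)$. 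If you want to salvage your combinatorial route, you would need to replace $[[r_1,r_2],f]$ by generators whose third entry also carries the appropriate $e_k$-divisibility and then carry out the cyclic-summand comparison in full; but you should be aware that the derived-category argument above makes all of that unnecessary.
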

\begin{proof}
Let us first  construct a natural model for the element $L\mathfrak
L_s^3(G_{ab})$ of the \linebreak derived category of abelian groups. We have the the following commutative diagram:
$$\xyma{\Lambda^3(\bar R) \ar@{>->}[r]\ar@{>->}[d] & \Lambda^2(\bar
R)\otimes
\bar R\ar@{->>}[r] \ar@{>->}[d] & \EuScript L^3(\bar R)\ar@{>->}[d]\\
\Lambda^2(\bar R)\otimes \bar F\ar@{>->}[r] \ar@{->}[d] &
\Lambda^2(\bar R)\otimes \bar F\oplus (\bar R\otimes \bar R\otimes
\bar F) \ar@{->>}[r]\ar@{->}[d] & \bar R\otimes \bar R\otimes \bar F\ar@{->}[d]\\
\bar R\otimes \operatorname{SP}^2(\bar F)\ar@{>->}[r] \ar@{->}[d] & \bar R\otimes
\operatorname{SP}^2(\bar F)\oplus (\bar F\otimes \bar R\otimes \bar F)
\ar@{->>}[r] \ar@{->}[d] & \bar F\otimes \bar R\otimes \bar F\ar@{->}[d]\\
\operatorname{SP}^3(\bar F)\ar@{>->}[r] & \operatorname{SP}^2(\bar F)\otimes \bar F\ar@{->>}[r] &
\mathfrak L_s^3(\bar F)}
$$
where the left vertical complex is the Koszul complex representing
the element $L\operatorname{SP}^3(G_{ab})$, the middle complex is the tensor
product of the Koszul complex with $\bar R\to \bar F$. Also we have the following commutative diagram:

$$
\xyma{\EuScript L^3(\bar R)\ar@{>->}[r] \ar@{>->}[d] & \bar
R\otimes \bar R\otimes \bar F\ar@{>->}[d] \ar@{->}[r] & \bar
R\otimes \bar F\otimes \bar F \ar@{->}[r] \ar@{>->}[d] & \mathfrak L_s^3(\bar F)\ar@{=}[d]\\
\EuScript L^3(\bar F)\ar@{>->}[r] \ar@{->>}[d] & \bar F\otimes
\bar F\otimes \bar F\ar@{->}[r] \ar@{->>}[d] & \bar F\otimes \bar
F\otimes \bar F\ar@{->>}[r] \ar@{->>}[d] & \mathfrak L_s^3(\bar
F)\\
\EuScript L^3(\bar F)/\EuScript L^3(\bar R)\ar@{->}[r] &
\frac{\bar F\otimes \bar F\otimes \bar F}{\bar R\otimes \bar
R\otimes \bar F}\ar@{->}[r] & \frac{\bar F\otimes \bar F\otimes
\bar F}{\bar F\otimes \bar R\otimes \bar F} }
$$
in which the  middle horizontal sequence is exact. Since the upper
horizontal \linebreak sequence models $L\mathfrak L_s^3(G_{ab}),$ the kernel
of the lower left horizontal map is $L_2\mathfrak L_s^3(G_{ab}).$
On invoking the identifications
$$
\frac{\mathfrak f^3}{\mathfrak r^2\mathfrak f+\mathfrak
f^4}=\frac{\bar F\otimes \bar F\otimes \bar F}{\bar R\otimes \bar
R\otimes \bar F},\ \
\gamma_3(F)/(\gamma_3(R)\gamma_4(F))=\EuScript L^3(\bar
F)/\EuScript L^3(\bar R),
$$
our assertion   follows from
the following natural exact sequence
$$
0\to\frac{D(4,\,\mathfrak r^2\mathfrak f)}{\gamma_3(R)\gamma_4(F)}\to \EuScript L^3(\bar F)/\EuScript
L^3(\bar R)\to \frac{\bar F\otimes \bar F\otimes \bar F}{\bar
R\otimes \bar R\otimes \bar F}.
$$
\end{proof}

\vspace{.5cm}

\section{Limits}

Let $\mathfrak A$ be the category of abelian groups. A
{\it representation} of a category $\mathcal C$ is a functor $\mathcal
F:\mathcal C\to \mathfrak A.$  Let $\mathfrak A^{\mathcal C}$ denote the category of representations of
$\mathcal C$.  The diagonal
functor $diag: \mathfrak A\to \mathfrak A^{\mathcal C}$ is the functor that maps an abelian
group $A$ to a constant representation  $\mathcal C\to {\mathfrak A}$, namely, the one which sends all objects from $\mathcal C$ to $A$ and all morphisms to
${\rm id}_A.$ The {\it limit of a representation}  $\ilimit $ is a left exact
functor  $\ilimit : \mathfrak A^{\mathcal
C}\to \mathfrak A$. There are different equivalent ways to define this functor, one way,  for example, is to define it as the right adjoint functor to
the diagonal functor. We refer the reader to the recent paper
\cite{IvanovMikhailov} for the theory of the functor  $\ilimit$  and its derived functors.
If  $\mathcal C$ is a category such that for any two objects $C$
and $C'$ there exists a morphism $f:C\to C'.$ then, for a functor,
$\mathcal F:\mathcal C\to \mathfrak A$,  $\ilimit\ \mathcal
F$ is the largest constant subfunctor of $\mathcal F$ and one can
perceive $\ilimit\ \mathcal F$ as the largest subgroup of
$\mathcal F(C)$ which does not depend on $C$.
\par\vspace{.25cm}Let $\mathcal G$ be the category of groups, and $\mathcal E$ the category of free presentations $$E:\ 1\to R\to F\to G\to 1$$ of $G\in \mathcal G$. Then,  every functor $\mathcal F:\mathcal E\to \mathfrak A$     gives rise to a functor $\mathcal G\to \mathfrak A,\ G\mapsto \ilimit \mathcal F$. It is naturally of interest to examine the limits of such functors. For example, if  $\mathcal F:\mathcal E\to \mathcal G$ is a functor satisfying $\mathcal F(E)\subseteq \gamma_n(F)$, $n\geq 2$, then we have a functor $\bar{\mathcal F}:\mathcal E\to \mathfrak A$, $E\mapsto \gamma_n(F)/\mathcal F(E)\gamma_{n+1}(F)$,  and there arises the  problem of describing $\ilimit \bar{\mathcal F}$. We examine in the present  section instances of this type. Let us first recall an observation from \cite{EmmMikh}.
\par\vspace{.25cm}
Let $\mathcal{C}$ be a category with pairwise coproducts, i.e.,  having the property that for
any two objects $C_1,\ C_2\in \mathcal{C}$ there exists the coproduct
$C_1 \overset{i_1}{\longrightarrow} C_1\sqcup C_2
\overset{i_2}{\longleftarrow} C_2$ in $\mathcal{C}$. Then, for any
representation $\mathcal{F}:\mathcal C\to \mathfrak A$,  there is a natural
transformation
\begin{equation}{\sf T}_{\mathcal F,\,C}:\mathcal{F}(C)\oplus \mathcal{F}(C) \longrightarrow \mathcal{F}(C\sqcup C),
\end{equation}
given by ${\sf T}_{\mathcal
F,\,C}=(\mathcal{F}(i_1),\,\mathcal{F}(i_2)).$ The representation
$\mathcal{F}$ of $\mathcal{C}$ is said to be\linebreak  {\it monoadditive}
(resp. {\it additive}) if ${\sf T}_{\mathcal F,\,C}$  is a
monomorphism (resp. isomorphism) for all $C\in \mathcal C$. The following observation gives a
useful tool for description of  limits (see
\cite{EmmMikh}): \par\vspace{.25cm}\begin{quote}{\it If $\mathcal F:\mathcal C\to \mathfrak A$ is  a monoadditive representation, then  $\ilimit \mathcal F=0$.}\end{quote}
\par\vspace{.25cm}
\begin{theorem}\label{quadraticpicture}{\it (Quadratic functors)}
\begin{align*}
& (i)\ \ \ilimit \frac{\gamma_2(F)}{[R,\,F]\gamma_3(F)}=\Lambda^2(G_{ab})\\
& (ii)\ \ \ilimit \frac{\gamma_2(F)}{(R\cap
\gamma_2(F))\gamma_3(F)}=\gamma_2(G)/\gamma_3(G)\\
& (iii)\ \ \ilimit
\frac{\gamma_2(F)}{\gamma_2(R)\gamma_3(F)}=L_1\operatorname{SP}^2(G_{ab})
\end{align*}
\end{theorem}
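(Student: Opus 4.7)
The plan is to handle (i) and (ii) by showing that the representations in question are naturally isomorphic to \emph{constant} functors on $\mathcal E$, and to reduce (iii) to the monoadditivity criterion via Theorem~\ref{D(3)}.

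For (i), I would use the natural identification $\gamma_2(F)/\gamma_3(F)\cong\Lambda^2(\bar F)$ (given by $[x,y]\gamma_3(F)\leftrightarrow\bar x\wedge\bar y$), under which $[R,F]\gamma_3(F)/\gamma_3(F)$ becomes the subgroup of $\Lambda^2(\bar F)$ generated by $\bar r\wedge\bar f$, that is, the image of $\bar R\otimes\bar F\to\Lambda^2(\bar F)$. The elementary right exact sequence $\bar R\otimes\bar F\to\Lambda^2(\bar F)\to\Lambda^2(G_{ab})\to 0$ then produces the natural isomorphism $\gamma_2(F)/[R,F]\gamma_3(F)\cong\Lambda^2(G_{ab})$. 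For (ii), the inclusion $\gamma_3(F)\subseteq\gamma_2(F)$ yields $(R\cap\gamma_2(F))\gamma_3(F)=R\gamma_3(F)\cap\gamma_2(F)$, and the second isomorphism theorem gives $\gamma_2(F)/(R\cap\gamma_2(F))\gamma_3(F)\cong\gamma_2(F)R/\gamma_3(F)R=\gamma_2(G)/\gamma_3(G)$. In each case the right-hand side depends only on $G$, so the representation is naturally isomorphic to a constant functor and its limit coincides with the displayed value.

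For (iii), Theorem~\ref{D(3)} supplies the natural identification $D(3,\mathfrak r\mathfrak f)/\gamma_2(R)\gamma_3(F)\cong L_1\operatorname{SP}^2(G_{ab})$. Combined with the inclusions $\gamma_2(R)\gamma_3(F)\subseteq D(3,\mathfrak r\mathfrak f)\subseteq\gamma_2(F)$ (both immediate: $\gamma_2(R)\subseteq 1+\mathfrak r^2\subseteq 1+\mathfrak r\mathfrak f$, $\gamma_3(F)\subseteq 1+\mathfrak f^3$, and $D(3,\mathfrak r\mathfrak f)\subseteq 1+\mathfrak f^2$), this furnishes a natural short exact sequence of representations of $\mathcal E$,
\[
0\to L_1\operatorname{SP}^2(G_{ab})\to\gamma_2(F)/\gamma_2(R)\gamma_3(F)\to\gamma_2(F)/D(3,\mathfrak r\mathfrak f)\to 0,
\]
whose leftmost term is constant. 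Applying the left exact functor $\varprojlim$, it suffices to show $\varprojlim\,\gamma_2(F)/D(3,\mathfrak r\mathfrak f)=0$. The map $w\mapsto w-1$ combined with the standard identifications $\mathfrak f^2/\mathfrak f^3\cong\bar F\otimes\bar F$ and $(\mathfrak r\mathfrak f+\mathfrak f^3)/\mathfrak f^3\cong\bar R\otimes\bar F$ embeds $\gamma_2(F)/D(3,\mathfrak r\mathfrak f)$ naturally into $\mathfrak f^2/(\mathfrak r\mathfrak f+\mathfrak f^3)\cong G_{ab}\otimes\bar F$. In the category of free presentations of a fixed group $G$ the coproduct of $F\twoheadrightarrow G$ and $F'\twoheadrightarrow G$ is the free product $F*F'\twoheadrightarrow G$ (with kernel the evident pullback), and $(F*F')_{ab}=\bar F\oplus\bar F'$, so $E\mapsto G_{ab}\otimes\bar F$ is an additive, hence monoadditive, representation. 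The criterion of \cite{EmmMikh} quoted in the excerpt then gives $\varprojlim G_{ab}\otimes\bar F=0$, and left exactness of $\varprojlim$ forces the embedded subfunctor to have vanishing limit as well.

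The main delicate point will be verifying that the embedding $\gamma_2(F)/D(3,\mathfrak r\mathfrak f)\hookrightarrow G_{ab}\otimes\bar F$ is both injective and natural in $E$: injectivity reduces to $\gamma_2(F)\cap(1+\mathfrak r\mathfrak f+\mathfrak f^3)=D(3,\mathfrak r\mathfrak f)$, which is automatic because $D(3,\mathfrak r\mathfrak f)\subseteq\gamma_2(F)$, while naturality amounts to checking that the Magnus-style identification of $\mathfrak f^2/(\mathfrak r\mathfrak f+\mathfrak f^3)$ with $G_{ab}\otimes\bar F$ respects morphisms of presentations. Once this routine bookkeeping is in place, all three assertions follow formally from the constancy observations for (i) and (ii) and from the monoadditivity criterion applied to the quotient functor for (iii).
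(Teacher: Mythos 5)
Your proposal is correct and follows essentially the same route as the paper: (i) and (ii) are handled by exhibiting the representations as constant functors, and (iii) is reduced, via the exact sequence $1\to D(3,\mathfrak r\mathfrak f)/\gamma_2(R)\gamma_3(F)\to \gamma_2(F)/\gamma_2(R)\gamma_3(F)\to \mathfrak f^2/(\mathfrak r\mathfrak f+\mathfrak f^3)$ and Theorem~\ref{D(3)}, to the vanishing of $\ilimit$ on the additive representation $G_{ab}\otimes F_{ab}$. The only difference is that you spell out the routine identifications that the paper declares obvious.
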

\begin{proof}
The cases (i), (ii) are obvious, since the representations in
(i), (ii) are \linebreak constant. To prove the case (iii), consider the exact
sequence
\begin{equation}\label{fcap}
1\to \frac{D(3, \,\mathfrak{rf})}{[R,\,R]\gamma_3(F)}\to \frac{\gamma_2(F)}{[R,\,\,R]\gamma_3(F)}\to
\frac{\mathfrak f^2}{\mathfrak{rf}+\mathfrak f^3}.
\end{equation}
There is a natural isomorphism
$$
\frac{\mathfrak f^2}{\mathfrak{rf}+\mathfrak f^3}\cong
G_{ab}\otimes F_{ab}.
$$
The representation $G_{ab}\otimes F_{ab}$ is monoadditive; moreover, it is, in fact, additive. Therefore, its limit is zero. The
statement (iii) follows from the identification of the left hand
side of (\ref{fcap}) with $L_1\operatorname{SP}^2(G_{ab})$ (see
Theorem \ref{D(3)}).
\end{proof}

Let us next recall the definition of the {\it tensor square for a non-abelian
group}. For a group $G$, the tensor square $G\otimes G$ is the
group generated by the symbols $g\otimes h,\ g,\,h\in G$, satisfying
 the following defining relations:
\begin{align}
& fg\otimes h=(g^{f^{-1}}\otimes h^{f^{-1}})(f\otimes h)
\label{relation1}\\ & f\otimes gh=(f\otimes g)(f^{g^{-1}}\otimes
h^{g^{-1}}),\label{relation2}
\end{align}
for all $f,\,g,\,h\in G$.
\par\vspace{.25cm}
The {\it exterior square} $$G\wedge G:=G\otimes G/(\langle
g\otimes g\,|\, g\in G\rangle$$ is naturally isomorphic to
$[F,\,F]/[R,\,F]$. Since $G_{ab}\wedge
G_{ab}=\gamma_2(F)/([R,\,F]\gamma_3(F)),$ there is a natural
isomorphism $$
\frac{\gamma_3(F)}{[R,\,F]\cap \gamma_3(F)}\cong\ker\{G\wedge G\to
G_{ab}\wedge G_{ab}\}
$$

\begin{theorem}\label{cubicpicture}{\it (Cubic functors)}
\begin{align*}
& (i)\ \ \ilimit\ \frac{\gamma_3(F)}{[R,\,F,\,F]\gamma_4(F)}=\EuScript
L^3(G_{ab})\\
& (ii)\ \ \ilimit \frac{\gamma_3(F)}{(R\cap
\gamma_3(F))\gamma_4(F)}=\gamma_3(G)/\gamma_4(G)\\
& (iii)\ \ \ilimit \frac{\gamma_3(F)}{([R,\,F]\cap
\gamma_3(F))\gamma_4(F)}=\ker\{G/\gamma_3(G)\wedge
G/\gamma_3(G)\to
G_{ab}\wedge G_{ab}\}\\
& (iv)\ \ \ilimit\
\frac{\gamma_3(F)}{\gamma_3(R)\gamma_4(F)}=L_2\mathfrak
L_s^3(G_{ab})\\
& (v)\ \ \ilimit\ \frac{\gamma_3(F)}{[\gamma_2(R),\,F]\gamma_4(F)}=L_1\operatorname{SP}^3(G_{ab})\\
& (vi)\ \ \ilimit\ \frac{\gamma_3(F)}{[R,\,\gamma_2(F)]\gamma_4(F)}=0
\end{align*}
\end{theorem}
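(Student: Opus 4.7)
The plan is to treat each identification by either (a) recognizing the representation as a constant functor on $\mathcal{E}$, in which case $\ilimit$ returns the value, or (b) fitting the representation into a natural left-exact sequence whose kernel is the claimed (constant) value and whose image in the subsequent term is a tensor-type representation of trivial limit, via the monoadditivity criterion of \cite{EmmMikh}. Parts (i)--(iii) go by (a). For (i), passing to the graded Lie algebra $\bigoplus_n \gamma_n(F)/\gamma_{n+1}(F)$ identifies $\gamma_3(F)/[R,F,F]\gamma_4(F)$ with $\EuScript L^3(\bar F)$ modulo the Lie ideal generated by $\bar R$, which equals $\EuScript L^3(\bar F/\bar R)=\EuScript L^3(G_{ab})$. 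For (ii), the Dedekind/modular law $(R\gamma_4(F))\cap\gamma_3(F)=(R\cap\gamma_3(F))\gamma_4(F)$ gives the constant isomorphism $\gamma_3(F)/(R\cap\gamma_3(F))\gamma_4(F)\cong \gamma_3(G)/\gamma_4(G)$. For (iii), the identifications $G/\gamma_3(G)\wedge G/\gamma_3(G)\cong \gamma_2(F)/[R,F]\gamma_4(F)$ and $G_{ab}\wedge G_{ab}\cong \gamma_2(F)/[R,F]\gamma_3(F)$, together with a second application of Dedekind, present the kernel as the constant functor $\gamma_3(F)/([R,F]\cap\gamma_3(F))\gamma_4(F)$.

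For (iv), I would use the natural left-exact sequence
$$
0\to\frac{D(4,\mathfrak r^2\mathfrak f)}{\gamma_3(R)\gamma_4(F)}\to\frac{\gamma_3(F)}{\gamma_3(R)\gamma_4(F)}\to\frac{\mathfrak f^3}{\mathfrak r^2\mathfrak f+\mathfrak f^4}
$$
obtained via $u\mapsto u-1$, noting that $D(4,\mathfrak r^2\mathfrak f)\subseteq\gamma_3(F)$ by the results of Section~1. Theorem~\ref{super} identifies the kernel with the constant functor $L_2\mathfrak L_s^3(G_{ab})$; it then suffices to show that the image in $\mathfrak f^3/(\mathfrak r^2\mathfrak f+\mathfrak f^4)\cong \bar F^{\otimes 3}/\bar R\otimes\bar R\otimes\bar F$ has trivial limit. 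For this I would decompose $(\bar F\oplus\bar F)^{\otimes 3}$ into its eight direct summands $\bar F_i\otimes\bar F_j\otimes\bar F_k$ and verify directly that the sum-of-inclusions map is injective modulo the coproduct's $\bar R_{12}\otimes\bar R_{12}\otimes(\bar F\oplus\bar F)$ relations, using that $\bar R_{12}=\ker(\bar F\oplus\bar F\to G_{ab})$ constrains the cross-components. Part~(vi) is the cleanest of these cases: identifying $\gamma_3(F)/[R,\gamma_2(F)]\gamma_4(F)$ with $\EuScript L^3(\bar F)/[\bar R,\EuScript L^2(\bar F)]$ and invoking the decomposition $\EuScript L^2(\bar F\oplus\bar F)=\EuScript L^2(\bar F)\oplus\EuScript L^2(\bar F)\oplus(\bar F\otimes\bar F)$, the cross-bracket summand has no preimage on the summand side, yielding monoadditivity and zero limit.

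Part~(v) is the delicate one, flagged as \emph{not combinatorial}. The route through Corollary~\ref{der1} alone identifies $D(4,\mathfrak f\mathfrak r\mathfrak f)/[R,R,F]\gamma_4(F)$ with $L_1\operatorname{SP}^3(G_{ab})$ only after inverting $2$ (via Theorem~\ref{main1}), so one cannot mimic the left-exact strategy used for (iv) directly. Instead, following the pattern of Theorem~\ref{super}, I would build a natural resolution-level model for $L\operatorname{SP}^3(G_{ab})$ out of the Koszul complex $\Lambda^3\bar R\to\Lambda^2\bar R\otimes\bar F\to\bar R\otimes\operatorname{SP}^2\bar F\to\operatorname{SP}^3\bar F$, tensor it with $\bar R\to\bar F$ to form a double complex, and align one of its cokernel rows with a natural left-exact sequence having $\gamma_3(F)/[R,R,F]\gamma_4(F)$ in the middle and a tensor-type representation of zero limit on the right. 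Applying $\ilimit$ reads the limit off as $H_1$ of the Koszul complex, namely $L_1\operatorname{SP}^3(G_{ab})$, and simultaneously produces the functorial embedding $L_1\operatorname{SP}^3(G_{ab})\hookrightarrow D(4,\mathfrak f\mathfrak r\mathfrak f)/[R,R,F]\gamma_4(F)$ advertised in the remark after Theorem~\ref{main1}. The main obstacle is the careful construction of this double complex so that the excess $2$-torsion present in $D(4,\mathfrak f\mathfrak r\mathfrak f)/[R,R,F]\gamma_4(F)$ visibly dies in the limit for derived-functor rather than commutator-collection reasons; this is the step that carries the real content of (v).
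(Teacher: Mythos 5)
Your treatment of (i)--(iii) and the overall architecture of (iv) match the paper: the first three representations are constant, and for (iv) the paper uses exactly your left-exact sequence with kernel $D(4,\mathfrak r^2\mathfrak f)/\gamma_3(R)\gamma_4(F)\cong L_2\mathfrak L_s^3(G_{ab})$ from Theorem \ref{super}. The only divergence there is the mechanism for killing $\ilimit \mathfrak f^3/(\mathfrak r^2\mathfrak f+\mathfrak f^4)$: rather than your hands-on injectivity check over the eight summands of $(\bar F\oplus\bar F)^{\otimes 3}$ (delicate, because the relevant kernel $\overline{R'}=\ker(\bar F\oplus\bar F\to G_{ab})$ is strictly larger than $\bar R\oplus\bar R$), the paper applies Proposition 3.7 of \cite{IvanovMikhailov} to the short exact sequence $0\to\bar R\otimes\bar R\otimes F_{ab}\to F_{ab}^{\otimes 3}\to\mathfrak f^3/(\mathfrak r^2\mathfrak f+\mathfrak f^4)\to 0$, in which both outer terms have vanishing higher limits. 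That is a difference of route, not a gap.

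The genuine gaps are in (v) and (vi). For (v) you correctly diagnose that the kernel $D(4,\mathfrak f\mathfrak r\mathfrak f)/[R,R,F]\gamma_4(F)$ of the obvious left-exact sequence is not the constant functor $L_1\operatorname{SP}^3(G_{ab})$ when $G_{ab}$ has $2$-torsion, but your proposed fix --- ``read the limit off as $H_1$ of the Koszul complex'' from a double complex --- never explains why the excess $2$-torsion dies in the limit, and you concede this is ``the step that carries the real content.'' That step is the whole theorem. The paper supplies it by a diagram chase reducing the problem to a \emph{second} derived limit, $\ilimit\,\gamma_3(F)/[\gamma_2(R),F]\gamma_4(F)=\ilimit^2 K=\ilimit^2\Lambda^3(\bar R)$ with $K=\ker\{\Lambda^2(\bar R)\otimes F_{ab}\to\EuScript L^3(F_{ab})\}$, and then computes $\ilimit^2\Lambda^3(\bar R)=L_1\operatorname{SP}^3(G_{ab})$ by the method of Theorem 8.1 of \cite{IvanovMikhailov}; no $\ilimit$-level or $H_1$-of-Koszul argument substitutes for this. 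For (vi), your claim that $\EuScript L^3(\bar F)/[\bar R,\EuScript L^2(\bar F)]$ is monoadditive because ``the cross-bracket summand has no preimage'' does not address the actual danger: in the coproduct the relation subgroup is $[\overline{R'},\Lambda^2(\bar F\oplus\bar F)]$, and $\overline{R'}$ projects \emph{onto} each copy of $\bar F$ (it contains every $(a,-a)$), so generators such as $[a_1-a_2,\,y_1\wedge z_1]$ contribute $[a_1,y_1\wedge z_1]$ with $a$ arbitrary to the pure first summand; injectivity is precisely the assertion that these contributions cannot conspire, after cancellation of cross terms, to impose new relations on the pure summands, and that requires proof. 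The paper avoids this entirely: it rewrites $[R,\gamma_2(F)]\gamma_4(F)=(\gamma_2(S)\cap\gamma_3(F))\gamma_4(F)$ with $S=R\gamma_2(F)$, embeds the representation into $\gamma_2(F)/\gamma_2(S)\gamma_4(F)$, and deduces vanishing from the injectivity of $\theta:D(4,\mathfrak f\mathfrak s)/\gamma_2(S)\gamma_4(F)\to D(3,\mathfrak f\mathfrak s)/\gamma_2(S)\gamma_3(F)$ established combinatorially in Section 1 via Lemma \ref{L1}, together with monoadditivity of the honest tensor representations $\mathfrak f^2/(\mathfrak f\mathfrak s+\mathfrak f^i)$, $i=3,4$.
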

\begin{proof}
The cases $(i)$ - $(iii)$ are obvious, since the representations are
constant.

\par\vspace{.25cm} \noindent $(iv)$ Consider the exact sequence
$$
1\to \frac{D(4,\,\mathfrak r^2\mathfrak f)}{\gamma_3(R)\gamma_4(F)}\to
\frac{\gamma_3(F)}{\gamma_3(R)\gamma_4(F)}\to \frac{\mathfrak
f^3}{\mathfrak r^2\mathfrak f+\mathfrak f^4}
$$
The right hand representation lies in the short exact sequence
$$
0\to \bar R\otimes \bar R\otimes F_{ab}\to F_{ab}^{\otimes
3}\to\frac{\mathfrak f^3}{\mathfrak r^2\mathfrak f+\mathfrak f^4}
\to 0,
$$
where $\bar R=R/R\cap [F,\,F]$. By Proposition  3.7 in  \cite{IvanovMikhailov},
$\ilimit^i \bar R\otimes \bar R\otimes
F_{ab}=\ilimit^iF_{ab}^{\otimes 3}=0,\ i\geq 0$. Hence,
$\ilimit\frac{\mathfrak f^3}{\mathfrak r^2\mathfrak f+\mathfrak
f^4}=0$. Consequently, the statement (iv) follows from Theorem \ref{super}.

\par\vspace{.25cm}\noindent $(v)$ Consider the following
commutative diagram with exact rows and columns
$$\xyma{& & & K \ar@{>->}[d]\\ & & \Lambda^2(\bar R) \otimes \bar F\ar@{=}[r] \ar@{>->}[d] & \Lambda^2(\bar R)\otimes \bar F\ar@{->}[d]\\
\Lambda^3(\bar R) \ar@{>->}[d] & \Lambda^3(\bar
F)\ar@{>->}[r]\ar@{=}[d] & \Lambda^2(\bar
F)\otimes \bar F\ar@{->>}[r]\ar@{->>}[d] & \EuScript L^3(\bar F)\ar@{->>}[d]\\
K \ar@{>->}[r]\ar@{->>}[d] & \Lambda^3(\bar F) \ar@{->}[r] &
\frac{\Lambda^2(\bar F)}{\Lambda^2(\bar R)}\otimes \bar
F\ar@{->>}[r] & \frac{\gamma_3(F)}{[R,\,R,\,F]\gamma_4(F)}\\
L_2\operatorname{SP}^3(G_{ab})}
$$
where $\bar{F}=F_{ab},\  \bar{R}=R/R\cap \gamma_2(F)$ and $K=\ker\{\Lambda^2(\bar R)\otimes F_{ab}\to \EuScript
L^3(F_{ab})\}.$ It follows from Prop. 3.7 in  \cite{IvanovMikhailov},
that
$$
\ilimit \frac{\gamma_3(F)}{[\gamma_2(R),F]\gamma_4(F)}=\ilimit^2\
K=\ilimit^2\ \Lambda^3(\bar R).
$$
The arguments similar to those given in the proof of  Theorem 8.1 in \cite{IvanovMikhailov}
imply that
$$
\ilimit^2\ \Lambda^3(\bar R)=L_1\operatorname{SP}^3(G_{ab})
$$
and thus the asserted  statement  follows.

\par\vspace{.25cm}\noindent  $(vi)$
 Observe that
$[R,\,\gamma_2(F)]\gamma_4(F)=[R\gamma_2(F),\,\gamma_2(F)]\gamma_4(F).$ Setting   $S=R\gamma_2(F)$, there is
an equality
$$
[\gamma_2(F),\,S]\gamma_4(F)=(\gamma_2(S)\cap \gamma_3(F))\gamma_4(F).
$$
We have the following commutative diagram with exact rows and
columns:
$$
\xyma{ & \frac{D(4,\,\mathfrak f\mathfrak
s)}{\gamma_2(S)\gamma_4(F)} \ar@{>->}[d] \ar@{>->}[r]^\theta &
\frac{D(3,\,\mathfrak
f\mathfrak s)}{\gamma_2(S)\gamma_3(F)}\ar@{>->}[d] \\
\frac{\gamma_3(F)}{[S,\,\gamma_2(F)]\gamma_4(F)}\ar@{->}[d] \ar@{>->}[r] &
\frac{\gamma_2(F)}{\gamma_2(S)\gamma_4(F)} \ar@{->}[d]
\ar@{->>}[r] &
\frac{\gamma_2(F)}{\gamma_2(S)\gamma_3(F)}\ar@{->}[d]\\
\frac{\mathfrak f^3}{\mathfrak{fs}\cap\mathfrak{f^3}+\mathfrak
f^4}\ar@{>->}[r] & \frac{\mathfrak f^2}{\mathfrak f\mathfrak
s+\mathfrak f^4}\ar@{->>}[r] & \frac{\mathfrak f^2}{\mathfrak
f\mathfrak s+\mathfrak f^3}}
$$
Since representations $\mathfrak f^2/(\mathfrak f\mathfrak s+f^i),\
i=3,\,4$ are monoadditive,
$$
\ilimit \frac{\mathfrak f^2}{\mathfrak f\mathfrak s+\mathfrak
f^3}=\ilimit \frac{\mathfrak f^2}{\mathfrak f\mathfrak s+\mathfrak
f^4}=0.
$$
Hence there is the following commutative square of monomorphisms
$$
\xyma{\ilimit \frac{D(4,\,\mathfrak f\mathfrak
s)}{\gamma_2(S)\gamma_4(F)} \ar@{=}[d]\ar@{>->}[r]^{\ilimit
\theta}& \ar@{=}[d] \frac{D(3,\,\mathfrak f\mathfrak
s)}{\gamma_2(S)\gamma_3(F)} \ar@{=}[d]\\ \ilimit
\frac{\gamma_2(F)}{\gamma_2(S)\gamma_4(F)}\ar@{->}[r] & \ilimit
\frac{\gamma_2(F)}{\gamma_2(S)\gamma_3(F)}.}
$$
The middle horizontal exact sequence implies the following exact
sequence of limits:
$$
1\to \ilimit \frac{\gamma_3(F)}{[R,\,\gamma_2(F)]\gamma_4(F)}\to \ilimit
\frac{\gamma_2(F)}{\gamma_2(S)\gamma_4(F)}\to \ilimit
\frac{\gamma_2(F)}{\gamma_2(S)\gamma_3(F)}
$$
Since the right hand map is a monomorphism, the stated claim follows. \end{proof}\par\vspace{.25cm}
\section*{Acknowledgement}
The research of the first author is supported by the Russian
Science Foundation, grant N 14-21-00035. The results reported in this  paper
were obtained during his  visit  to Indian Institute of Science Education and Reseach
Mohali; he   wishes to express his gratitude to the
Institute for its  warm hospitality.

\vspace{.5cm}

\par\vspace{.5cm}\noindent
Roman Mikhailov\\
St Petersburg Department of Steklov Mathematical Institute\\
and\\
Chebyshev Laboratory\\
St Petersburg State University\\
14th Line, 29b\\
Saint Petersburg\\
199178 Russia\\
email: romanvm@mi.ras.ru
\par\vspace{.5cm}\noindent
Inder Bir S. Passi\\
Centre for Advanced Study in Mathematics\\
Panjab University\\
Sector 14\\
Chandigarh 160014 India\\
and\\
Indian Institute of Science Education and Research\\
Mohali (Punjab)140306 India\\
email: ibspassi@yahoo.co.in
\end{document}